%
\documentclass[12pt, reqno]{amsart}
\usepackage{lineno,hyperref}

\usepackage{cases}
\usepackage[square, comma, sort&compress, numbers]{natbib}
\usepackage{float}
\usepackage{pifont}
\usepackage{bbding}
\usepackage{amssymb}
\usepackage{mathrsfs}
\usepackage{subfigure}
\usepackage{caption}
\usepackage{geometry}
\usepackage{amsmath, amsthm, amscd, amsfonts, amssymb, graphicx, color}
\usepackage{lineno}
\textheight 22.5truecm \textwidth 14.5truecm
\geometry{left=2.5cm,right=2.5cm,top=2.5cm,bottom=2.5cm}
\newtheorem{thm}{Theorem}
\newtheorem{theorem}{Theorem}[section]
\newtheorem{lemma}[theorem]{Lemma}
\newtheorem{proposition}[theorem]{Proposition}
\newtheorem{corollary}[theorem]{Corollary}
\theoremstyle{definition}
\newtheorem{definition}{Definition}
\newtheorem{example}{Example}

\newtheorem{remark}{Remark}
\newtheorem{conj}{Conjecture}
\newtheorem{algor}{Algorithmic}
\newtheorem{case}{Case}
\theoremstyle{remark}
\numberwithin{equation}{section}
\DeclareMathOperator{\RE}{Re}
\DeclareMathOperator{\IM}{Im}

\newcommand{\ID}{\mathbb{D}}

\newcommand{\ds}{\displaystyle}
\begin{document}
\thispagestyle{empty} \setcounter{page}{1}


\title[On univalent log-harmonic mappings ]
{On univalent log-harmonic mappings}
\author[Z. Liu]{ZhiHong Liu }
\address{Z. Liu, 
College of Science, Guilin University of Technology, Guilin 541006, Guangxi, People's Republic of China.}
\email{\textcolor[rgb]{0.00,0.00,0.84}{liuzhihongmath@163.com}}

\author[S. Ponnusamy]{Saminathan Ponnusamy 
}
\address{S. Ponnusamy, Department of Mathematics,
Indian Institute of Technology Madras, Chennai-600 036, India.
}
\email{\textcolor[rgb]{0.00,0.00,0.84}{samy@iitm.ac.in}}


\subjclass[2010] {Primary: 30C35, 30C45; Secondary: 35Q30}

\keywords{Schwarz function, harmonic and univalent log-harmonic mappings, log-harmonic right half-plane mappings,
starlike, coefficient estimates, growth and distortion theorems, log-harmonic coefficient conjecture
}

\begin{abstract}
We consider the class univalent log-harmonic mappings  on the unit disk.
Firstly, we obtain necessary and sufficient conditions for a complex-valued continuous function to be starlike or convex
in the unit disk. Then we present a general idea, for example, to construct log-harmonic Koebe mapping, log-harmonic
right half-plane mapping and log-harmonic two-slits mapping and then we show precise ranges of these mappings. Moreover,
coefficient estimates for univalent log-harmonic starlike mappings are obtained. Growth and distortion theorems for
certain special subclass of log-harmonic mappings are studied. Finally, we propose two conjectures, namely, log-harmonic
coefficient and log-harmonic covering conjectures.
\end{abstract}
\maketitle

\section{Introduction and preliminary results}
\noindent Let $\mathcal{A}$ be the linear space of all analytic functions defined on the open unit disc $\ID$, and
let $\mathcal{B}$ be the set of all functions $\mu\in \mathcal{A}$ such that $|\mu(z)| < 1$ for all $z\in \mathbb{D}$.
A log-harmonic mapping is a solution of the nonlinear elliptic partial differential equation
\begin{equation}\label{dfLH}
\overline{f_{\overline{z}}(z)}=\mu(z)\left(\frac{\overline{f(z)}}{f(z)}\right)f_{z}(z),
\end{equation}
where $\mu$ is the second complex dilatation of $f$ and $\mu\in \mathcal{B}$. Thus, the Jacobian $J_{f}$ of $f$ is given by
$$J_{f}=|f_z|^2-|f_{\overline{z}}|^2=|f_z|^2(1-|\mu|^2),
$$
which is positive, and therefore, every non-constant log-harmonic mapping is sense-preserving and open in $\ID$. If $f$ is a
non-vanishing log-harmonic mapping in $\ID$, then $f$ can be expressed as
\begin{equation*}
f(z)=h(z)\overline{g(z)},
\end{equation*}
where $h(z)$ and $g(z)$ are non-vanishing analytic functions in $\ID$.
On the other hand, if $f$ vanishes at $z=0$ but is not identically zero, then such a mapping $f$ admits the following representation
\begin{equation*}
f(z)=z|z|^{2\beta}h(z)\overline{g(z)},
\end{equation*}
where $\RE \beta>-1/2$, $h,g\in\mathcal{A}$,  $h(0)\neq 0$ and $g(0)=1$ (cf.~\cite{Abdulhadi1988}).
The class of functions of this form has been widely studied. See, for example~\cite{Ali2016,Mao2013,AbdulHadi2015}.

For simplicity, we set $\beta=0$ and consider the class $\mathcal{S}_{Lh}$ of univalent log-harmonic mappings $f$ of the form
\begin{equation*}
f(z)=zh(z)\overline{g(z)},
\end{equation*}
where $h,g\in\mathcal{A}$ with the normalization $h(0)=g(0)=1$, and such that
\begin{equation}\label{LHBJC}
h(z)=\exp\left(\sum_{n=1}^{\infty}a_{n}z^n\right)\quad{\text{and}}\quad g(z)=\exp\left(\sum_{n=1}^{\infty}b_{n}z^n\right).
\end{equation}

A complex-valued function $f:\,\Omega\to \mathbb{C}$ is said to belong to the class $C^{1}(\Omega)$ (resp. $C^{2}(\Omega)$)
if $\RE f$ and $\IM f$ have continuous first order (resp.~second order) partial derivatives in $\Omega$. For $f\in C^{1}(\Omega)$,
consider the complex linear differential operator $Df$ defined on $C^{1}(\Omega)$ by
$$Df=zf_{z}-\overline{z}f_{\overline{z}}.
$$
In view of the Riemann mapping theorem, it suffices to consider the case when $\Omega=\ID$.
\begin{definition}
Let $\alpha\in[0,1)$. A univalent function $f\in C^{1}(\ID)$ with $f(0)=0$ is called \emph{starlike of order $\alpha$},
denoted by $\mathcal{FS}^{*}(\alpha)$, if
\begin{equation*}
\frac{\partial}{\partial\theta} \left(\arg f(re^{i\theta})\right)
=\RE\left(\frac{Df(z)}{f(z)}\right)
=\RE\left(\frac{zf_z(z)-\overline{z}f_{\overline{z}}(z)}{f(z)}\right)>\alpha,
\end{equation*}
for all $z=re^{i\theta}\in\ID\backslash\{0\}$. We set $\mathcal{FS}^{*}(0)=:\mathcal{FS}^{*}$, and functions in
$\mathcal{FS}^{*}$ are $C^{1}$-(fully) starlike in $\ID$. If  $f\in C^{1}(\ID)$
is replace by $f\in {\mathcal S}$,  then $\mathcal{FS}^{*}(\alpha)$ coincides with the class $\mathcal{S}^{*}(\alpha)$,
of analytic starlike functions of order $\alpha$.
\end{definition}

We denote by $\mathcal{S}_{H}^{*}(\alpha)$ and $\mathcal{S}^{*}_{Lh}(\alpha)$ the set of all starlike harmonic functions of
order $\alpha$ and starlike log-harmonic functions of order $\alpha$, respectively. If $\alpha=0$, then we simply denote these classes
by $\mathcal{S}_{H}^{*}$ and $\mathcal{S}^{*}_{Lh}$, respectively. Thus, they are called the set of all starlike harmonic functions
and starlike log-harmonic functions, respectively. These classes are investigated in detail by a number of authors. See for example \cite{Abdulhadi2006,AbdulHadi2015}.

The following theorem establishes a link between the classes $\mathcal{S}^{*}_{Lh}(\alpha)$ and $\mathcal{S}^{*}(\alpha)$.

\begin{thm}\label{thmLSS}
{\rm(\cite[Lemma 2.4]{Abdulhadi1987} and~\cite[Theorem 2.1]{Abdulhadi2006})}
Let $f(z)=zh(z)\overline{g(z)}$ be a log-harmonic mapping on $\ID$, $0\not\in (hg)(\ID)$. Then $f\in\mathcal{S}^{*}_{Lh}(\alpha)$
if and only if $\varphi(z)=zh(z)/g(z)\in\mathcal{S}^{*}(\alpha)$.
\end{thm}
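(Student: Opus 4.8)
The plan is to reduce the whole statement to a single pointwise identity between $Df/f$ and the classical logarithmic derivative $z\varphi'(z)/\varphi(z)$. Since $0\notin(hg)(\ID)$, both $h$ and $g$ are zero-free on $\ID$, so $\varphi(z)=zh(z)/g(z)$ is analytic on $\ID$ with $\varphi(0)=0$ and $\varphi'(0)=h(0)/g(0)=1$. Differentiating the product $f(z)=zh(z)\overline{g(z)}$ directly (treating $z$ and $h(z)$ as holomorphic and $\overline{g(z)}$ as antiholomorphic) gives
\[
\frac{f_z(z)}{f(z)}=\frac1z+\frac{h'(z)}{h(z)},\qquad \frac{f_{\overline z}(z)}{f(z)}=\overline{\left(\frac{g'(z)}{g(z)}\right)},
\]
hence, using $\overline{z}\,\overline{w}=\overline{zw}$,
\[
\frac{Df(z)}{f(z)}=z\,\frac{f_z(z)}{f(z)}-\overline{z}\,\frac{f_{\overline z}(z)}{f(z)}=1+z\frac{h'(z)}{h(z)}-\overline{\left(z\frac{g'(z)}{g(z)}\right)}.
\]
On the other hand, logarithmic differentiation of $\varphi$ gives $\dfrac{z\varphi'(z)}{\varphi(z)}=1+z\dfrac{h'(z)}{h(z)}-z\dfrac{g'(z)}{g(z)}$, and since $\RE\overline{w}=\RE w$ we obtain the key identity
\[
\RE\frac{Df(z)}{f(z)}=\RE\frac{z\varphi'(z)}{\varphi(z)}\qquad\text{for all }z\in\ID\setminus\{0\}.
\]

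With this identity the two directions are almost immediate. If $f\in\mathcal S_{Lh}^{*}(\alpha)$, then $\RE(Df(z)/f(z))>\alpha$ on $\ID\setminus\{0\}$, so $\RE(z\varphi'(z)/\varphi(z))>\alpha$ there; since $\varphi$ is analytic with $\varphi(0)=0$ and $\varphi'(0)=1$, this is precisely the analytic definition of membership in $\mathcal S^{*}(\alpha)$ (and, as is classical, that inequality already forces $\varphi\in{\mathcal S}$). Conversely, if $\varphi\in\mathcal S^{*}(\alpha)$ then $\RE(z\varphi'(z)/\varphi(z))>\alpha$, hence $\RE(Df(z)/f(z))>\alpha$ on $\ID\setminus\{0\}$, which is the starlikeness inequality for $f$.

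The one point that still needs care — and, I expect, the only real obstacle — is the univalence of $f$ built into the definition of $\mathcal S_{Lh}^{*}(\alpha)$: the inequality $\RE(Df/f)>\alpha\ge 0$ only says that $\theta\mapsto\arg f(re^{i\theta})$ is strictly increasing on each circle $|z|=r<1$, which by itself does not yield global injectivity of a log-harmonic mapping. I would close this gap by exploiting the relation between the dilatations: comparing the two expressions above for $f_{\overline z}/f$ with the defining equation \eqref{dfLH} yields $zg'(z)/g(z)=\mu(z)\bigl(1+zh'(z)/h(z)\bigr)$, so $g$ (and hence $f$) is completely determined by the pair $(\varphi,\mu)$ with $\mu\in\mathcal B$; one then shows, via a winding-number/argument-principle count together with the monotonicity just noted, that $f$ is a homeomorphism of $\ID$ onto a domain starlike with respect to the origin. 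More economically, this last step is exactly the content of \cite[Lemma 2.4]{Abdulhadi1987} and \cite[Theorem 2.1]{Abdulhadi2006}, which may simply be invoked. Apart from this univalence bookkeeping, the proof is the short computation above.
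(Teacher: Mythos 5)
Your computation is correct and is exactly the identity the paper itself uses: since $f=zh\overline{g}=\varphi(z)|g(z)|^{2}$ with $\varphi=zh/g$, the paper's Proposition \ref{thmGSC} proves the same pointwise equality $\RE\bigl(Df/f\bigr)=\RE\bigl(z\varphi'/\varphi\bigr)$ by the same logarithmic differentiation. Note, however, that the paper offers no proof of Theorem \ref{thmLSS} at all --- it is quoted verbatim from \cite[Lemma 2.4]{Abdulhadi1987} and \cite[Theorem 2.1]{Abdulhadi2006} --- so the only comparison available is with that proposition, whose proof likewise addresses only the inequality and not univalence.

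You correctly isolate the one genuinely nontrivial point: the definitions of both $\mathcal{S}^{*}_{Lh}(\alpha)$ and $\mathcal{S}^{*}(\alpha)$ build in univalence. In the direction $f\in\mathcal{S}^{*}_{Lh}(\alpha)\Rightarrow\varphi\in\mathcal{S}^{*}(\alpha)$ there is no gap, since for an analytic $\varphi$ with $\varphi(0)=0$, $\varphi'(0)=1$ and $\varphi\neq 0$ on $\ID\setminus\{0\}$ the inequality $\RE(z\varphi'/\varphi)>\alpha\geq 0$ classically forces univalence. The converse is where the work lies: monotonicity of $\theta\mapsto\arg f(re^{i\theta})$ together with the fact that $\arg f$ increases by exactly $2\pi$ around each circle $|z|=r$ (which follows from the argument principle applied to $\varphi$, since the two angular derivatives agree) shows each circle maps to a starlike Jordan curve, but one still must show these curves are nested, e.g.\ via a degree argument using that $f$ is open and sense-preserving. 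Your sketch points in the right direction but is not a complete argument, and your fallback is to invoke the very results being proved; that is acceptable for a quoted theorem, but be aware that the ``univalence bookkeeping'' is the actual mathematical content of the citations, not a routine afterthought.
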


In~\cite{LiPei2013},  Li,  et al. proved the following result.
\begin{thm}\label{thmLSW}
Let $f(z)=\varphi(z)|z|^{2(p-1)}~(p\geq 1)$, where $\varphi\in C^{1}(\ID)$ is starlike (not necessarily harmonic) in $\ID$.
Then $f\in C^{1}(\ID)$ is starlike and univalent in $\ID$.
\end{thm}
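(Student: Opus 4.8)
The plan is to check, for $f(z)=\varphi(z)|z|^{2(p-1)}$, each item in the definition of $\mathcal{FS}^{*}$: that $f\in C^{1}(\ID)$ with $f(0)=0$, that $\RE\big(Df(z)/f(z)\big)>0$ on $\ID\setminus\{0\}$, and that $f$ is univalent. Set $\rho(z)=|z|^{2(p-1)}=(z\overline{z})^{p-1}$, so $\rho_{z}=(p-1)\rho/z$ and $\rho_{\overline{z}}=(p-1)\rho/\overline{z}$. On $\ID\setminus\{0\}$ the map $f=\varphi\rho$ is manifestly $C^{1}$; near the origin, since $\varphi(0)=0$ and $\varphi\in C^{1}$ give $|\varphi(z)|\le C|z|$ with bounded first partials, one checks $|f(z)|=O(|z|^{2p-1})$ and $|f_{z}|,|f_{\overline{z}}|=O(|z|^{2(p-1)})\to 0$ as $z\to 0$, so $f\in C^{1}(\ID)$ and $f(0)=0$ (the case $p=1$ is trivial, $f=\varphi$).

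The argument inequality reduces to a one-line computation. From $zf_{z}=z\varphi_{z}\rho+(p-1)\varphi\rho$ and $\overline{z}f_{\overline{z}}=\overline{z}\varphi_{\overline{z}}\rho+(p-1)\varphi\rho$ the lower-order terms cancel, giving
\[
Df=zf_{z}-\overline{z}f_{\overline{z}}=\rho\,(z\varphi_{z}-\overline{z}\varphi_{\overline{z}})=|z|^{2(p-1)}\,D\varphi .
\]
Since $\varphi$ is univalent with $\varphi(0)=0$ we have $\varphi(z)\ne 0$ for $z\ne 0$, so dividing by $f=\rho\varphi$ yields $Df/f=D\varphi/\varphi$ on $\ID\setminus\{0\}$ and hence $\RE\big(Df(z)/f(z)\big)=\RE\big(D\varphi(z)/\varphi(z)\big)>0$ there. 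The same identity also records what I will need next: $\arg f(z)=\arg\varphi(z)$ and $|f(z)|=|z|^{2(p-1)}|\varphi(z)|$.

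The remaining — and essential — point is univalence. Suppose $f(z_{1})=f(z_{2})$; if the common value is $0$ then $\varphi(z_{j})=0$, forcing $z_{1}=z_{2}=0$, so assume it equals $w_{0}\ne 0$. Then $z_{1},z_{2}\ne 0$ and, because $\arg f=\arg\varphi$, the images $\varphi(z_{1}),\varphi(z_{2})$ lie on one ray $\ell=\{t e^{i\phi}:t>0\}$, $\phi=\arg w_{0}$. As $\Omega:=\varphi(\ID)$ is starlike with respect to $0=\varphi(0)$, the segment $\ell\cap\Omega$ can be parametrized by $z(t)=\varphi^{-1}(t e^{i\phi})$; differentiating $\varphi(z(t))=t e^{i\phi}$ with the real-linear chain rule gives $\varphi_{z}\dot z+\varphi_{\overline{z}}\overline{\dot z}=e^{i\phi}$, so $\dot z=\big(\overline{\varphi_{z}}e^{i\phi}-\varphi_{\overline{z}}e^{-i\phi}\big)/J_{\varphi}$, and substituting $e^{i\phi}=\varphi/|\varphi|$ and simplifying I expect
\[
\frac{d}{dt}|z(t)|^{2}=2\RE\big(\overline{z(t)}\,\dot z(t)\big)=\frac{2t}{J_{\varphi}(z(t))}\,\RE\!\left(\frac{D\varphi(z(t))}{\varphi(z(t))}\right)>0 .
\]
Thus $|z(t)|$ is strictly increasing along $\ell\cap\Omega$, hence so is $|f(z(t))|=|z(t)|^{2(p-1)}\,t$ (here $p\ge 1$ is used). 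Writing $z_{j}=z(t_{j})$ with $t_{j}=|\varphi(z_{j})|$, the equality $|f(z_{1})|=|f(z_{2})|$ forces $t_{1}=t_{2}$ and then $z_{1}=z_{2}$, which is the desired univalence; together with the first two steps, $f\in\mathcal{FS}^{*}$. The main obstacle is precisely the displayed monotonicity identity, which tacitly uses that $\varphi$ is sense-preserving so that $J_{\varphi}>0$ and $\varphi^{-1}$ is $C^{1}$ — automatic in the harmonic/log-harmonic situations this theorem is meant for; alternatively one could run a degree argument along the homotopy $f_{s}=\varphi\,|z|^{2s}$, $s\in[0,p-1]$, for each of which $\RE(Df_{s}/f_{s})=\RE(D\varphi/\varphi)>0$.
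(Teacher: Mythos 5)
Your computation of the key identity is exactly the paper's route: the paper does not actually reprove Theorem~\ref{thmLSW} (it is quoted from Li, Ponnusamy and Wang) but instead proves the generalization, Proposition~\ref{thmGSC}, whose entire proof is the observation that $zf_z$ and $\overline{z}f_{\overline{z}}$ pick up the same extra term from the modulus factor, so that $\RE(Df/f)=\RE(D\varphi/\varphi)$; your cancellation $Df=|z|^{2(p-1)}D\varphi$ is the special case $g(z)=z^{p-1}$ of that computation, and your $C^{1}$ check near the origin is a reasonable addition. Where you genuinely part company with the paper is univalence: the paper absorbs it into the definition of $\mathcal{FS}^{*}$ and writes that the conclusion follows, so your third paragraph supplies an argument the paper never gives.

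That argument, however, contains the gap you yourself flag, and it is a real one under the stated hypotheses. ``Starlike'' here means only that $\varphi$ is univalent, $C^{1}$, $\varphi(0)=0$, and $\RE(D\varphi/\varphi)>0$; nothing guarantees $J_{\varphi}>0$, so you may not invert $\varphi$ differentiably along $\ell\cap\Omega$ nor divide by $J_{\varphi}$ in your formula for $\dot z$. (The same caveat undercuts the proposed degree homotopy: without sense-preservation, degree $1$ only counts preimages with signs.) The monotonicity you need can be obtained with no Jacobian hypothesis at all: on each circle $|z|=r$, univalence makes $\varphi(re^{i\theta})$ a Jordan curve with $0$ in its interior, and $\partial_{\theta}\arg\varphi>0$ forces its winding number about $0$ to be $+1$ and the curve to meet each ray exactly once, so every subdisk image $\varphi(\{|z|<r\})$ is starlike with respect to $0$. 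Hence if $t<t_{0}$ then $te^{i\phi}\in\varphi(\{|z|\le|z(t_{0})|\})$, i.e.\ $|z(t)|\le|z(t_{0})|$. This weak monotonicity of $|z(t)|$ already makes $t\mapsto|z(t)|^{2(p-1)}t$ strictly increasing, which is all your final step uses. With that substitution your proof closes and, unlike the paper's, actually addresses the univalence assertion in the statement.
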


We state the following simple result which generalizes both Theorems \ref{thmLSS} and \ref{thmLSW}.
\begin{proposition}\label{thmGSC}
Let $f(z)=\varphi(z)|g(z)|^2$ be a complex-valued function on $\ID$, where $\varphi, g\in \mathcal{A}$ and $\varphi(z),~g(z)\neq 0$
in $\ID\backslash\{0\}$. Then $f\in \mathcal{FS}^{*}(\alpha)$ if and only if $\varphi\in \mathcal{FS}^{*}(\alpha)$.
\end{proposition}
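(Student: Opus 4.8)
The plan is to reduce the whole statement to a single property of the argument function. Since $g\in\mathcal{A}$ and $g(z)\neq0$ on $\ID\setminus\{0\}$, the factor $|g(z)|^{2}$ is a strictly positive real number for every $z\in\ID\setminus\{0\}$; hence $f(z)$ and $\varphi(z)$ have the same argument there, $f(z)=0$ exactly when $\varphi(z)=0$, and in particular $f(0)=0\iff\varphi(0)=0$ while $f$ and $\varphi$ are simultaneously non-vanishing on $\ID\setminus\{0\}$. Thus the normalization bookkeeping built into the class $\mathcal{FS}^{*}(\alpha)$ transfers for free between $f$ and $\varphi$.

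The first real step is the identity
$$\frac{Df(z)}{f(z)}=\frac{D\varphi(z)}{\varphi(z)}+2i\,\IM\!\left(\frac{zg'(z)}{g(z)}\right),\qquad z\in\ID\setminus\{0\}.$$
I would establish it by writing $|g|^{2}=g\,\overline{g}$, applying the product rule to $f=\varphi\,g\,\overline{g}$ (with $\partial_{z}\overline{g}=0=\partial_{\overline{z}}g$) to obtain $Df=g\,\overline{g}\,D\varphi+\varphi\bigl(zg'\overline{g}-\overline{z}\,g\,\overline{g'}\bigr)$, and then dividing by $f=\varphi\,g\,\overline{g}$; the surviving term $zg'/g-\overline{zg'/g}$ is purely imaginary. (Equivalently, the identity is just the $\theta$-derivative of the relation $\arg f(re^{i\theta})=\arg\varphi(re^{i\theta})$ noted above, read through the formula $\partial_{\theta}\arg f=\RE(Df/f)$ recorded in the definition of $\mathcal{FS}^{*}(\alpha)$; analyticity of $\varphi$ is never used, so the argument also covers $\varphi\in C^{1}(\ID)$, which is what is needed to subsume Theorem~\ref{thmLSW}.) Taking real parts gives
$$\RE\!\left(\frac{Df(z)}{f(z)}\right)=\RE\!\left(\frac{D\varphi(z)}{\varphi(z)}\right),\qquad z\in\ID\setminus\{0\},$$
so the defining inequality $\RE(Dv/v)>\alpha$ holds with $v=f$ if and only if it holds with $v=\varphi$.

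It remains to transfer univalence. Fix $r\in(0,1)$. Since $f(re^{i\theta})$ and $\varphi(re^{i\theta})$ differ only by the positive factor $|g(re^{i\theta})|^{2}$, the closed curves $\theta\mapsto f(re^{i\theta})$ and $\theta\mapsto\varphi(re^{i\theta})$ have the same winding number about the origin; when one of $f,\varphi$ is univalent and vanishes at $0$ this number equals $1$. As $f$ is open and sense-preserving, the argument principle (a degree count) then forces $f$ to be one-to-one on $\{|z|\le r\}$, and since any two points of $\ID$ lie in a common such disk, $f$ is univalent on $\ID$. The converse direction is identical with the roles of $f$ and $\varphi$ exchanged, so $f\in\mathcal{FS}^{*}(\alpha)\iff\varphi\in\mathcal{FS}^{*}(\alpha)$.

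I expect the last step to be the only delicate point: the differential inequality alone does not control univalence, because if $g$ has a large logarithmic derivative the curve $\theta\mapsto f(re^{i\theta})$, while still turning monotonically, need not be a simple closed curve, so the winding-number/degree argument genuinely needs $f$ (respectively $\varphi$) to be sense-preserving. This costs nothing in the settings the proposition is designed to unify: a non-constant log-harmonic mapping has $J_{f}>0$ (as recalled before Theorem~\ref{thmLSS}), and for $|g(z)|^{2}=|z|^{2(p-1)}$ with $p\ge1$ one computes $J_{f}=|z|^{2(2p-3)}\bigl(|z|^{2}J_{\varphi}+2(p-1)|\varphi|^{2}\RE(D\varphi/\varphi)\bigr)$, which is positive whenever $\varphi$ is fully starlike (Theorem~\ref{thmLSW}). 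In that generality no extra hypothesis is needed; in full generality one should assume in addition that $f$ is sense-preserving.
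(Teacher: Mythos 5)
Your central computation is the same as the paper's: both proofs reduce the starlikeness inequality to the identity $\RE\left(Df/f\right)=\RE\left(D\varphi/\varphi\right)$ on $\ID\setminus\{0\}$, obtained by differentiating $f=\varphi\, g\,\overline{g}$ and observing that the contribution $zg'/g-\overline{zg'/g}$ of the factor $|g|^{2}$ is purely imaginary. The genuine difference is what happens after that identity. The paper stops there ("the desired conclusion follows"), which silently ignores that membership in $\mathcal{FS}^{*}(\alpha)$ requires univalence, not just the differential inequality; you are right that the inequality alone does not transfer univalence between $f$ and $\varphi$, and your winding-number/degree argument (equal arguments on each circle $|z|=r$, hence equal winding numbers, hence degree one for the open, sense-preserving map) is the standard way to close that gap. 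Your caveat is also substantively correct: the degree argument needs $f$ to be open and sense-preserving, which holds automatically in the two settings the proposition is meant to unify (non-constant log-harmonic mappings have $J_{f}>0$, and $|g|^{2}=|z|^{2(p-1)}$ is handled by Theorem~\ref{thmLSW}), but is an extra hypothesis in the stated generality, where $f=\varphi|g|^{2}$ with arbitrary analytic $g$ need not even be locally injective. So your proof is correct and, on the point where it diverges from the paper, more careful; the only thing to tighten is the phrase "when one of $f,\varphi$ is univalent and vanishes at $0$ this number equals $1$" --- you should say explicitly that you take $r$ small enough that the only zero of $\varphi$ inside $|z|\le r$ is the simple zero at the origin, or invoke the standard fully-starlike argument on each circle, before letting $r\to1$.
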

\begin{proof}
A simply calculation shows that
\begin{equation*}
zf_{z}(z)=z \varphi_{z}(z)|g(z)|^2+\varphi(z) z g'(z)\overline{g(z)}=\left(\frac{z\varphi_{z}(z)}{\varphi(z)}+\frac{zg'(z)}{g(z)}\right)f(z)
\end{equation*}
and similarly
\begin{equation*}
\overline{z}f_{\overline{z}}(z)=\left(\frac{\overline{z}\varphi_{\overline{z}}(z)}{\varphi(z)}
+\overline{\left(\frac{zg'(z)}{g(z)}\right)}\right)f(z),
\end{equation*}
which clearly implies that
\begin{equation*}
\begin{split}
\RE\left(\frac{zf_z(z)-\overline{z}f_{\overline{z}}(z)}{f(z)}\right)
=\RE\left(\frac{z\varphi_z(z)-\overline{z}\varphi_{\overline{z}}(z)}{\varphi(z)}\right).
\end{split}
\end{equation*}
The desired conclusion follows.
\end{proof}
\begin{definition}
Let $\alpha\in[0,1)$. A function $f\in C^{2}(\ID)$ with $f(0)=0$ and $\frac{\partial }{\partial\theta}f(re^{i\theta})\neq 0$,
$0<r<1$, is called a {\em fully convex of order $\alpha$}, denoted by $\mathcal{FC}(\alpha)$, if
\begin{equation*}
\begin{split}
\frac{\partial}{\partial\theta}\left(\arg\frac{\partial}{\partial\theta}f(re^{i\theta})\right)
&=\RE\left(\frac{D^2 f(z)}{Df(z)}\right)\\
&=\RE\left(\frac{zf_{z}(z)+\overline{z}f_{\overline{z}}(z)-2|z|^2f_{z\overline{z}}(z)+z^2f_{zz}(z)
+\overline{z}^2f_{\overline{z}\,\overline{z}}(z)}{zf_{z}(z)
-\overline{z}f_{\overline{z}}(z)}\right)>\alpha,
\end{split}
\end{equation*}
for $z=re^{i\theta}\in\ID\backslash\{0\}$ (see also~\cite{Chuaqui2004,LiPei2013} in order to distinguish convexity in the
analytic and the harmonic cases), where $D^2 f=z(Df)_z-\overline{z}(Df)_{\overline{z}}$. Set $\mathcal{FC}(0)=:\mathcal{FC}$,
the class of {\em fully convex} (univalent) on $\ID$. In the analytic case, $\mathcal{FC}(\alpha)$ coincides with the class
$\mathcal{C}(\alpha)$ of convex functions of order $\alpha$.
\end{definition}

In particular, we denote by $\mathcal{FC}_{H}^{0}(\alpha)$ and $\mathcal{FC}_{Lh}(\alpha)$ the set of all fully convex harmonic
functions of order $\alpha$ and fully  convex log-harmonic functions of order $\alpha$, respectively. If $\alpha=0$, we denote
these classes by $\mathcal{FC}_{H}^{0}$ and $\mathcal{FC}_{Lh}$, the set of all fully convex harmonic functions and fully
convex log-harmonic functions, respectively. Throughout the paper, we treat fully starlike (or convex) mappings as starlike
(or convex) mappings although this is not the case in strict sense.

We remark that the function $f(z)=\varphi(z)|g(z)|^2$ is not necessarily convex in $\ID$ even if $\varphi(z)$ is a complex-valued
convex mapping in $\ID$. For example, consider
\begin{equation*}
\varphi(z)=\frac{z}{1-z},\quad g(z)
=\exp\left(\frac{1}{4}\log\left(\frac{1-z}{1+z}\right)\right)\exp\left(\frac{z}{2(1-z)}\right)
\end{equation*}
and therefore, by \eqref{dfLH}, $f$ defined by
$$f(z)=\varphi(z)|g(z)|^2=\frac{z}{1-z}\left|\frac{1-z}{1+z}\right|^{1/2}
\exp\left(\RE\left(\frac{z}{1-z}\right)\right)
$$
is a univalent log-harmonic mapping in $\ID$ but is not convex in $\ID$ although $\varphi(z)=z/(1-z)$ is convex in $\ID$.
The image of $\ID$ under $f(z)$ is shown in Figure~\ref{fLHf}.

For an analytic proof of this fact, we need to show that
$$V(r,\theta)=:\RE\left\{\frac{D^2f\left(re^{i\theta}\right)}
{Df\left(re^{i\theta}\right)}\right\}<0
$$
for some $0<r<1$ and $0\leq\theta< 2\pi$. Since $f=\varphi |g|^2$, by direct calculation, we find that
\begin{equation*}
\begin{split}
  Df&=z\varphi'g\overline{g}+z\varphi g'\overline{g}-\varphi g\overline{z}\overline{g'}
\end{split}
\end{equation*}
and
\begin{equation*}
\begin{split}
  D^2f&=z\varphi'g\overline{g}+z^2\varphi''g\overline{g}+2z^2\varphi'g'\overline{g}
  +z\varphi g'\overline{g}+z^2\varphi g''\overline{g}-2z\varphi'g\overline{z}\,\overline{g}\\
  &\quad-2z\varphi g'\overline{z}\overline{g'}+\varphi g\overline{z}^2\overline{g''}.
\end{split}
\end{equation*}

Using this, we see that $V(r,\theta)<0$ for some values of $r$ and $\theta$. To avoid the technical details, we compute
$V(r,\theta)$ for $r=8/9$, and $\theta=\pi/3,~\pi/4,~2\pi/3$ with the help of \emph{Mathematica} (see Table \ref{tab1}).
According to this, we obtain that $f(z)$ is not convex in $\ID$.
\begin{table}[h]
\centering
\caption{$V(r,\theta)$ for values of $r$ and $\theta$}\label{tab1}
\begin{tabular}{|c|c|c|}
\hline
$r$ & $\theta$ & $V(r,\theta)$ \\\hline
8/9 & $\pi/4$ & -0.284821\\
\hline
8/9 & $\pi/3$ & -0.447807\\
\hline
8/9 & $2\pi/3$ & -0.510244\\
\hline
\end{tabular}
\end{table}

\begin{figure}[!h]
  \centering
  \includegraphics[height=3.0in,keepaspectratio]{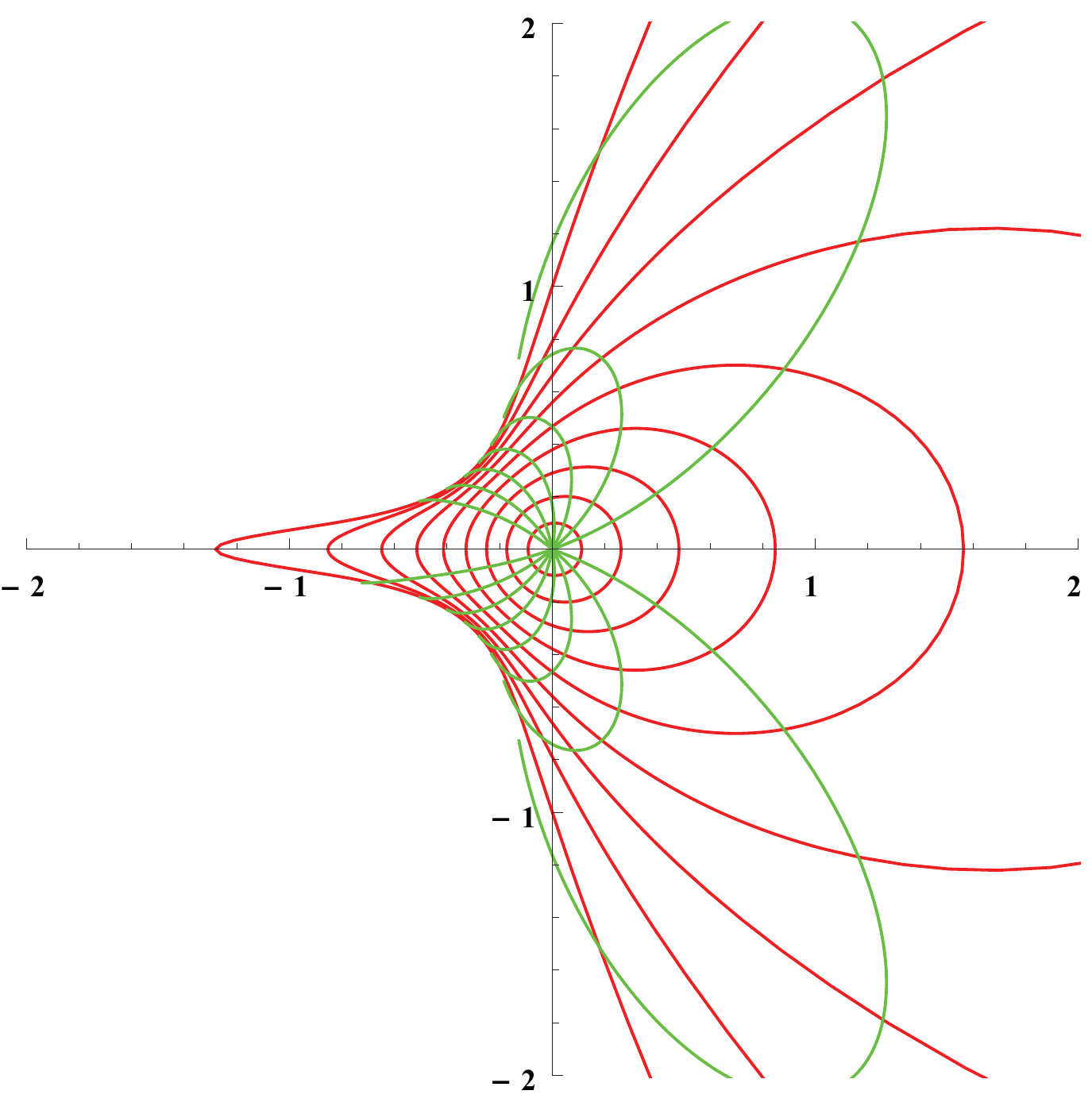}
\caption{Image of $\ID$ under $f(z)$}\label{fLHf}
\end{figure}

However, with $g(z)=z^{p-1}~(p\geq 1)$, we can obtain a necessary and sufficient condition for  the function $f$ of the form
$f(z)=\varphi(z)|z|^{2(p-1)}$ to be convex.
\begin{theorem}\label{thmNSCC}
Let $f(z)=\varphi(z)|z|^{2(p-1)}~(p\geq 1)$. Then $f\in \mathcal{FC}(\alpha)$ if and only if $\varphi\in \mathcal{FC}(\alpha)$.
\end{theorem}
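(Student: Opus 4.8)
The plan is to show that the operators $D$ and $D^{2}$ transform multiplicatively under the factor $|z|^{2(p-1)}$, namely that $Df = |z|^{2(p-1)}\,D\varphi$ and $D^{2}f = |z|^{2(p-1)}\,D^{2}\varphi$ on $\ID\backslash\{0\}$, so that the quotient $D^{2}f/Df$, and hence its real part, is literally unchanged when passing between $f$ and $\varphi$. The equivalence then drops out at once from the definition of $\mathcal{FC}(\alpha)$.

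First I would set $w(z)=|z|^{2(p-1)}=z^{p-1}\overline{z}^{\,p-1}$ and record the two elementary identities $z\,w_{z}=(p-1)w$ and $\overline{z}\,w_{\overline{z}}=(p-1)w$, so that $z\,w_{z}-\overline{z}\,w_{\overline{z}}=0$. Writing $f=\varphi\,w$ and expanding $Df=zf_{z}-\overline{z}f_{\overline{z}}$ by the product rule, the terms carrying a derivative of $w$ collect into $\varphi\,(z\,w_{z}-\overline{z}\,w_{\overline{z}})=0$, leaving $Df=w\,(z\varphi_{z}-\overline{z}\varphi_{\overline{z}})=w\,D\varphi$. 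Applying $D$ once more to $Df=w\,D\varphi$ and using the same cancellation gives
\[
D^{2}f=z(w\,D\varphi)_{z}-\overline{z}(w\,D\varphi)_{\overline{z}}
=(z\,w_{z}-\overline{z}\,w_{\overline{z}})D\varphi+w\bigl(z(D\varphi)_{z}-\overline{z}(D\varphi)_{\overline{z}}\bigr)
=w\,D^{2}\varphi ,
\]
this step requiring $\varphi\in C^{2}$ and being valid on $\ID\backslash\{0\}$, where $w>0$.

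Since $w(z)>0$ for $z\in\ID\backslash\{0\}$, we have $Df(z)\neq 0$ exactly when $D\varphi(z)\neq 0$ — so the non‑vanishing of $\partial_{\theta}f(re^{i\theta})=iDf$ is equivalent to that of $\partial_{\theta}\varphi(re^{i\theta})$ — and
\[
\frac{D^{2}f(z)}{Df(z)}=\frac{w(z)\,D^{2}\varphi(z)}{w(z)\,D\varphi(z)}=\frac{D^{2}\varphi(z)}{D\varphi(z)}
\qquad\text{for all }z\in\ID\backslash\{0\}.
\]
Taking real parts shows that $\RE\bigl(D^{2}f/Df\bigr)>\alpha$ throughout $\ID\backslash\{0\}$ if and only if $\RE\bigl(D^{2}\varphi/D\varphi\bigr)>\alpha$ there, which is precisely the claimed equivalence; the normalization $f(0)=0$ holds automatically, since either $p=1$ and $f=\varphi$, or $p>1$ and $w(0)=0$.

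The argument is entirely computational, so there is no real obstacle; the only point deserving care is the behaviour at the origin, because $|z|^{2(p-1)}$ is in general not $C^{2}$ at $0$ for non‑integer $p$. Exactly as in the definition of $\mathcal{FC}(\alpha)$, the defining inequality and the identities above are to be read on the punctured disk $\ID\backslash\{0\}$, with the base‑point condition $f(0)=0$ (and the tacit normalization $\varphi(0)=0$) imposed separately.
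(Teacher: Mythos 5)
Your proposal is correct and follows essentially the same route as the paper: both arguments hinge on the cancellation $z\,w_{z}-\overline{z}\,w_{\overline{z}}=0$ for $w=|z|^{2(p-1)}$ (in the paper this appears as the cancellation of the $(p-1)\varphi$ terms), giving $Df=wD\varphi$ and then $D^{2}f=wD^{2}\varphi$, so that $\RE(D^{2}f/Df)=\RE(D^{2}\varphi/D\varphi)$. Your phrasing via the derivation property of $D$ is a slightly cleaner packaging of the identical computation, and your remarks on the punctured disk and the non-vanishing of $Df$ are sensible additions that the paper leaves implicit.
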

\begin{proof}
We compute
\begin{equation*}
zf_{z}(z)=|z|^{2(p-1)}\left[z\varphi_{z}(z)+(p-1)\varphi(z)\right]
\end{equation*}
and similarly,
\begin{equation*}
\overline{z}f_{\overline{z}}(z)=|z|^{2(p-1)}
\left[\overline{z}\varphi_{\overline{z}}(z)+(p-1)\varphi(z)\right].
\end{equation*}
Thus, we have
\begin{equation*}
 Df=zf_{z}-\overline{z}f_{\overline{z}}
=|z|^{2(p-1)}(z\varphi_{z}-\overline{z}\varphi_{\overline{z}}),
\end{equation*}
which may be conveniently written as $F(z)=|z|^{2(p-1)}\Phi(z)$. Thus, to compute $D^{2}f=D(Df)=DF$, we first find that
\begin{equation*}
z\Phi_{z}(z)-\overline{z}\Phi_{\overline{z}}(z)
=z\left(\varphi_{z}(z)+z\varphi_{zz}(z)-\overline{z}\varphi_{\overline{z}z}(z)\right)-
\overline{z}\left(z\varphi_{z\overline{z}}(z)-\varphi_{z}(z)
-\overline{z}\varphi_{\overline{z}\,\overline{z}}(z)\right),
\end{equation*}
so that
\begin{equation*}
\begin{split}
D^{2}f=|z|^{2(p-1)}\left(z\Phi_{z}-\overline{z}\Phi_{\overline{z}}\right)
=|z|^{2(p-1)}\left(z\varphi_{z}+\overline{z}\varphi_{\overline{z}}-2|z|^2\varphi_{z\overline{z}}
+z^2\varphi_{zz}+\overline{z}^2\varphi_{\overline{z}\,\overline{z}}\right).
\end{split}
\end{equation*}
Finally, we see that
$$\RE\left(\frac{D^2f}{Df}\right)=\RE\left(\frac{z\varphi_{z}+\overline{z}\varphi_{\overline{z}}
-2|z|^2\varphi_{z\overline{z}}+z^2\varphi_{zz}+\overline{z}^2\varphi_{\overline{z}\,\overline{z}}}{z\varphi_{z}
-\overline{z}\varphi_{\overline{z}}}\right)=\RE\left(\frac{D^2\varphi}{D\varphi}\right),
$$
and this completes the proof.
\end{proof}

\begin{example}
Set $\varphi(z)=z-\lambda |z|^2$, where $0<|\lambda|<1/2$. It is easy to see that $\varphi(z)$ is log-harmonic mapping in $\ID$,
as a solution of~\eqref{dfLH} with the dilatation as
$$\mu(z)=\frac{\overline{\lambda} z}{1+\overline{\lambda} z}
$$
which is analytic in $\ID$ and $|\mu(z)|<1$ in $\ID$ (as $0<|\lambda|<1/2$). Simple calculation shows that
$$D\varphi(z)=z=D^{2}\varphi(z)
$$
and, for $0<|\lambda|<1/2$,
$$J_{\varphi}(z)=|1-\lambda \overline{z}|^2-|\lambda z|^2=1-2\RE(\lambda \overline{z})\geq 1-2|\lambda|>0.
$$
Consequently, $\varphi$ is convex and sense preserving in $\ID$, and by Theorem~\ref{thmNSCC}, we find that $f(z)=\varphi(z)|z|^{2(p-1)}$
is $\log$-$p$-harmonic convex in $\ID$. For more details about $\log$-$p$-harmonic mappings, see~\cite{LiPei2013} or \cite{LiPei2012}.
The images of $\ID$ under $\varphi(z)$ and $f(z)=\varphi(z)|z|^{2(p-1)}$ for $\lambda=1/4$ and $p=2$ are shown in Figure~\ref{f1}.
\end{example}

\begin{figure}[!h]
  \subfigure[Graph of $\varphi(z)=z-\frac{1}{4}|z|^2$]
  {\begin{minipage}[b]{0.45\textwidth}
  \includegraphics[height=2.6in,width=2.6in,keepaspectratio]{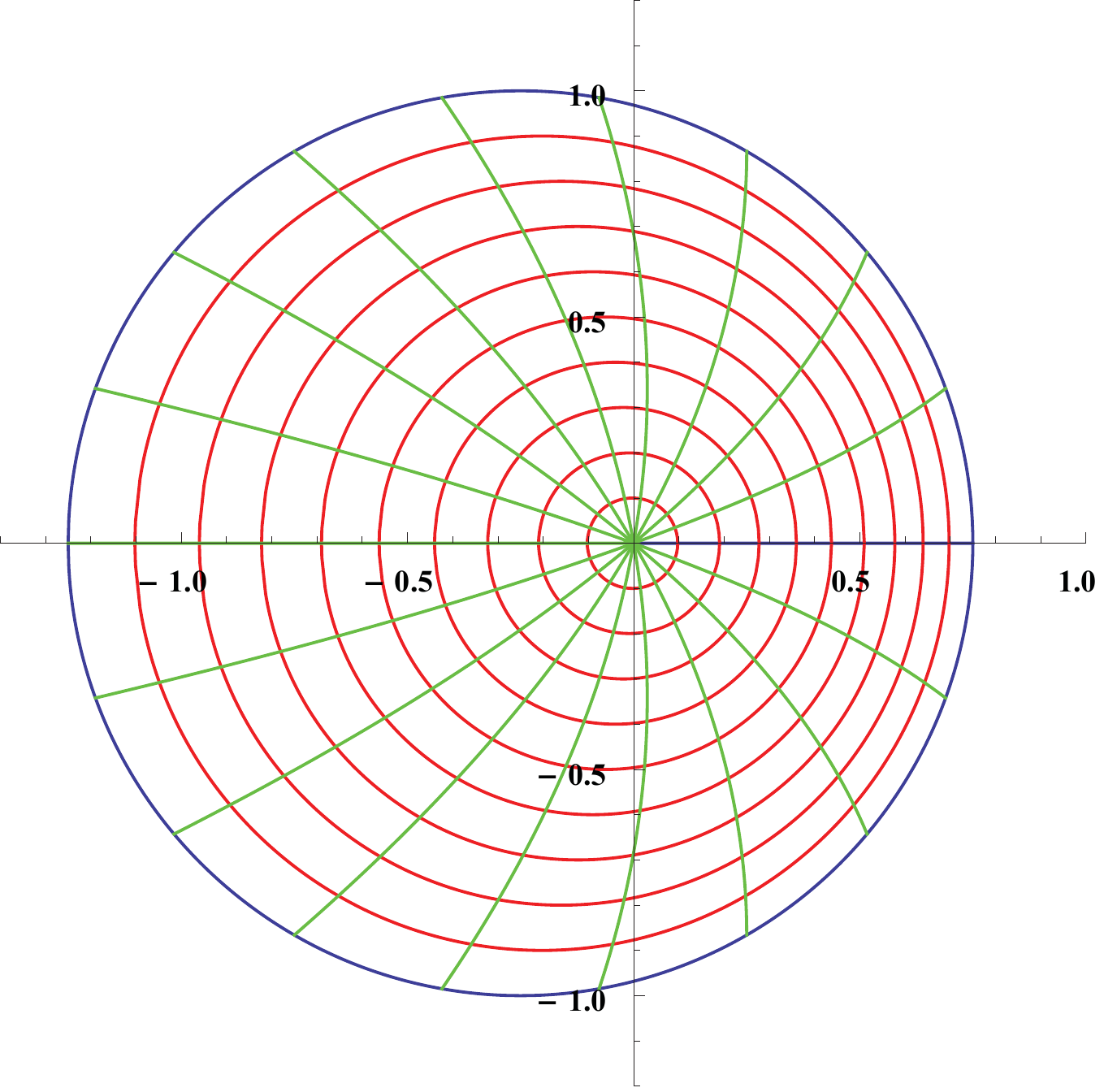}
\end{minipage}}
\subfigure[Graph of $f(z)=|z|^{2}(z-\frac{1}{4}|z|^2)$]
        {\begin{minipage}[b]{0.45\textwidth}
  \includegraphics[height=2.6in,width=2.6in,keepaspectratio]{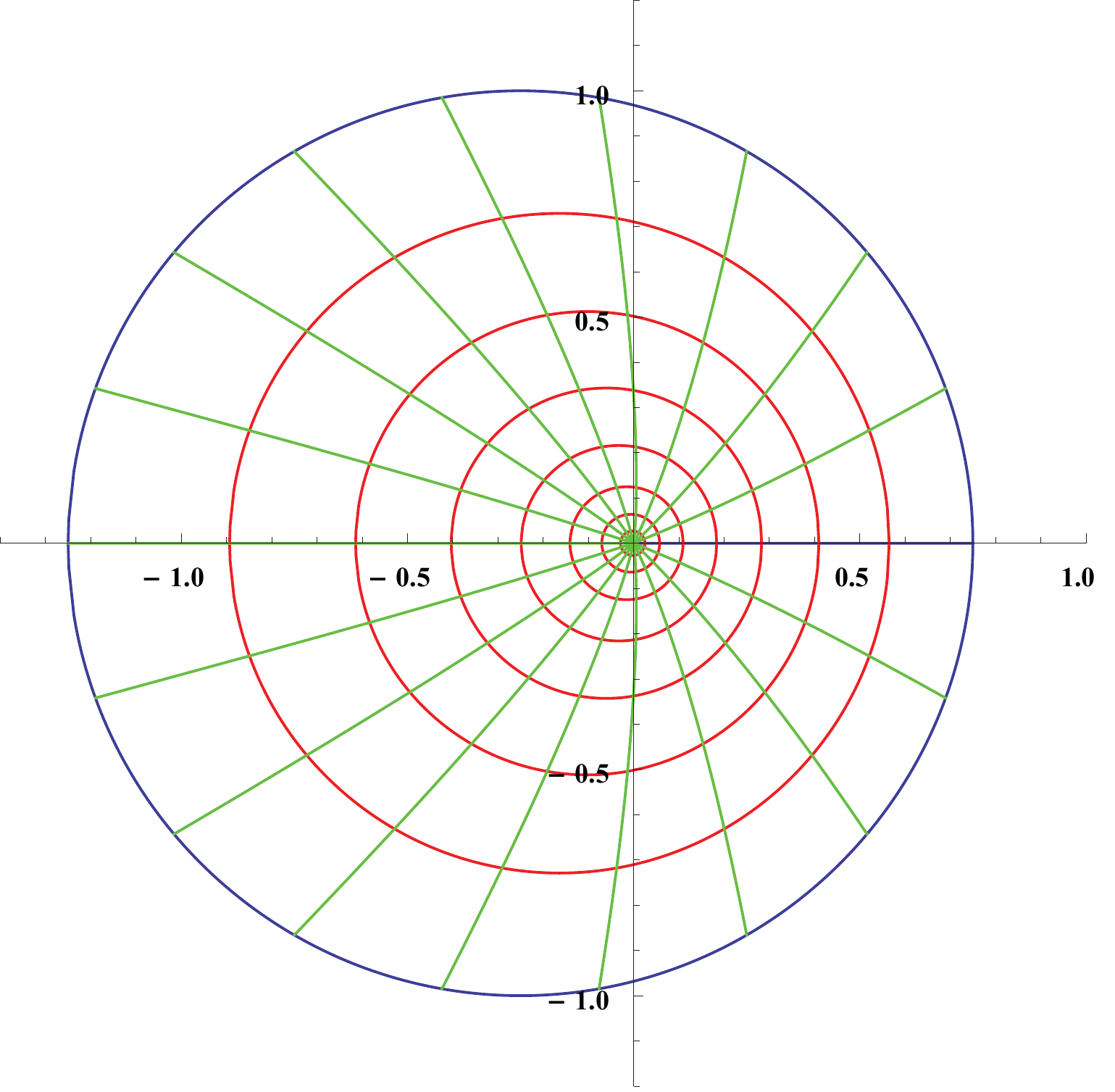}
\end{minipage}}
\caption{Images of $\ID$ under log-harmonic mapping $\varphi(z)=z-\lambda|z|^2$ and log-$p$-harmonic mapping
$f_{1}(z)=\varphi(z)|z|^{2(p-1)}$ for $\lambda=1/4$ and $p=2$.}\label{f1}
\end{figure}

The rest of the paper is arranged as follows: In Section~\ref{Sec2}, we construct some interesting univalent log-harmonic
mappings and show ranges of these mappings. In Section~\ref{Sec3}, we obtain sharp coefficient estimates for log-harmonic starlike
mappings. In Section~\ref{Sec4}, we study the growth and distortion theorems for the special subclass of $\mathcal{S}_{Lh}$.
Finally, in Section~~\ref{Sec5}, we propose coefficient conjecture and covering theorem for log-harmonic univalent mappings,
as an analog of Bieberbach conjecture.
\section{Construction of univalent log-harmonic mappings}\label{Sec2}
Clunie and Sheil-Small~\cite{Clunie1984} established a method of constructing a harmonic mapping onto a domain convex in
one direction by \emph{``shearing''} a given conformal mapping convex in the same direction. In ~\cite{Abdulhadi2012}, a
method was introduced for constructing univalent log-harmonic mappings $f(z)=zh(z)\overline{g(z)}\in \mathcal{S}_{Lh}$ from
the unit disk onto a strictly starlike domain $\Omega$.

Following the method of shearing-construction by Clunie and Shell-Small, we now present a method of log-harmonic mappings
with prescribed dilatation $\mu(z)$ with $\mu(0)=0$.
\begin{algor}\label{algor1}
Let $f(z)=zh(z)\overline{g(z)}$ be a sense-preserving log-harmonic mapping, where $h(z)$ and $g(z)$ are non-vanishing analytic
functions defined in $\ID$, normalized by $h(0)=g(0)=1$. Then the dilatation
$$\mu(z)=\frac{\overline{f_{\overline{z}}(z)}}{f_z(z)}\cdot\frac{f(z)}{\overline{f(z)}}
=\frac{zg'(z)/g(z)}{1+zh'(z)/h(z)}
$$
is analytic with $|\mu(z)|<1$ in $\ID$. According to the definition of log-harmonic mapping, the construction of log-harmonic
mappings proceeds by letting $zh(z)/g(z)=\varphi(z)$, where $\varphi$ is analytic satisfying $\varphi(0)=\varphi'(0)-1=0$, and
$\varphi(z)\neq 0$ for all $z\in \ID\backslash \{0\}$. This gives the pair of nonlinear differential equations
\begin{equation*}
\frac{zh(z)}{g(z)}=\varphi(z)\quad{\text{and}}\quad \frac{zg'(z)/g(z)}{1+zh'(z)/h(z)}=\mu(z),
\end{equation*}
which may be equivalently written as
\begin{equation*}
z\left(\log h\right)'(z)-z\left(\log g\right)'(z)=\frac{z\varphi'(z)}{\varphi(z)}-1\quad{\text{and}}\quad
z\left(\log g\right)'(z) =\mu(z)\left(1+z\left(\log h\right)'(z)\right).
\end{equation*}
Solving these two equations yield
$$\left(\log g\right)'(z) =\frac{\mu(z)}{1-\mu(z)}\cdot\frac{\varphi'(z)}{\varphi(z)}.
$$
Integrating with the normalization $g(0)=1$, we arrive at
\begin{equation}\label{EPRLHG}
g(z)=\exp\left(\int_{0}^{z}\left(\frac{\mu(s)}{1-\mu(s)}
\cdot\frac{\varphi'(s)}{\varphi(s)}\right)\,ds\right).
\end{equation}
In this way, we obtain the log-harmonic mapping $f$ defined by
\begin{equation}\label{EPRLHF}
f(z)=zh(z)\overline{g(z)}=\frac{zh(z)}{g(z)}|g(z)|^2
=\varphi(z)\exp\left(2\RE\int_{0}^{z}\left(\frac{\mu(s)}{1-\mu(s)}
\cdot\frac{\varphi'(s)}{\varphi(s)}\right)\,ds\right).
\end{equation}
\end{algor}

For the construction of the univalent log-harmonic mappings $f$ of the form $f(z)=zh(z)\overline{g(z)}$, the following steps
may be used:
\begin{enumerate}
\item Choose an arbitrary $\varphi\in \mathcal{S}^{*}$ and an arbitrary analytic function $\mu:\,\ID\to\ID$ with $\mu(0)=0$.
\item Establish the pair of equations
\begin{equation*}
\frac{zh(z)}{g(z)}=\varphi(z)\quad{\text{and}}\quad \frac{z g'(z)/g(z)}{1+z h'(z)/h(z)}=\mu(z).
\end{equation*}
\item Solving for $\left(\log g\right)'(z)$, and then integrating with normalization $g(0)=1$ yield \eqref{EPRLHG}.
\item The desired univalent log-harmonic mapping $f(z)=zh(z)\overline{g(z)}\in\mathcal{S}_{Lh}$ is then given by \eqref{EPRLHF}.
\end{enumerate}

Various choices of the conformal mapping $\varphi(z)$ and the dilatation $\mu(z)$ produce a number of univalent log-harmonic
mappings as demonstrated below.

\begin{example}\label{falpha}
Let $f_{\alpha}(z)=zh(z)\overline{g(z)}\in\mathcal{S}_{Lh}$ be a log-harmonic mapping in $\ID$ and satisfy the pair of equations
\begin{equation}\label{vpha}
\varphi_{\alpha}(z)=\frac{zh(z)}{g(z)}=\frac{z}{(1-z)^{2(1-\alpha)}}\quad{\text{and}}\quad \mu(z)=\frac{z g'(z)/g(z)}{1+z h'(z)/h(z)}=z,
\end{equation}
where $0\leq\alpha< 1$. The resulting nonlinear system
\begin{equation*}
\left\{\begin{aligned}
         z\left(\log h\right)'(z)-z\left(\log g\right)'(z)&=\frac{2(1-\alpha)z}{1-z}  \\
         \left(\log g\right)'(z)-z\left(\log h\right)'(z)&=1
                          \end{aligned}\right.
\end{equation*}
has the solution
\begin{equation*}
  \left(\log g\right)'(z)=\frac{1-z+2(1-\alpha)z}{(1-z)^2},
\end{equation*}
which by integration gives
\begin{equation*}
  \log g(z)=2(1-\alpha)\frac{z}{1-z}+(1-2\alpha)\log (1-z).
\end{equation*}
Thus, we have
\begin{equation}\label{eqGHA}
\left\{
\begin{aligned}
         g(z)&=(1-z)^{1-2\alpha}\exp\left(2(1-\alpha)\frac{z}{1-z}\right),  \\
         h(z)&=\frac{g(z)}{(1-z)^{2(1-\alpha)}}=\frac{1}{1-z} \exp\left(2(1-\alpha)\frac{z}{1-z}\right),
\end{aligned}
\right.
\end{equation}
and $f_{\alpha}(z)$ has the form
\begin{equation}\label{eqfa}
\begin{split}
  f_{\alpha}(z)&=\varphi_{a}(z)|g(z)|^2\\
  &=\frac{z}{(1-z)^{2(1-\alpha)}}
  \left|(1-z)^{1-2\alpha}\exp\left(2(1-\alpha)\frac{z}{1-z}\right)\right|^2.
\end{split}
\end{equation}

Note that $\varphi_{\alpha}(z)$ is starlike of order $\alpha$. By Theorem~\ref{thmLSS}, we know that the log-harmonic mapping
$f_{\alpha}(z)$ is also starlike of order $\alpha$ in $\ID$. Now we discuss two special cases for $\alpha=0$ and $\alpha=1/2$, respectively.

\begin{case}
In the case of $\alpha=0$, we have the Koebe function
\begin{equation*}
 \varphi_{0}(z)=\frac{z}{(1-z)^2}
\end{equation*}
and \eqref{eqGHA}, takes the form
\begin{equation}\label{eqGHAK}
\left\{
\begin{aligned}
g(z)&=(1-z)\exp\left(\frac{2z}{1-z}\right)=\exp\left(\sum_{n=1}^{\infty}\big(2+\frac{1}{n}\big)z^n\right),\\
h(z)&=\frac{1}{1-z} \exp\left(\frac{2z}{1-z}\right)=\exp\left(\sum_{n=1}^{\infty}\big(2-\frac{1}{n}\big)z^n\right).
\end{aligned}
\right.
\end{equation}
Finally \eqref{eqfa} takes the form
\begin{equation}\label{LKfeq}
f_{0}(z)=zh(z)\overline{g(z)}
=\frac{z}{(1-z)^2}|1-z|^2\exp\left(\RE\left(\frac{4z}{1-z}\right)\right),
\end{equation}
which is the well-known univalent \textbf{log-harmonic Koebe function}.
It is known that $f_{0}(z)$ maps $\ID$ onto the slit plane $f_{0}(\ID)=\mathbb{C}\backslash \{u+iv:\,u\leq -1/e^{2},v=0\}$.
This fact is known in \cite{Ali2016}, but the details were not given. For the sake of completeness, we include the details below.

Set $z=e^{i\theta},~0<\theta<2\pi$. A straightforward but tedious calculations show that
\begin{equation*}
\RE\left\{f_{0}(e^{i\theta})\right\}=-\frac{1}{e^2}\quad{\text{and}}\quad
\IM\left\{f_{0}(e^{i\theta})\right\}=0,
\end{equation*}
so that $f_{0}(z)=-\frac{1}{e^2}$ on the unit circle $|z|=1$ except at the point $z=1$. Now, consider the M\"{o}bius transformation
$$w=\frac{1+z}{1-z}=u+iv,
$$
which maps $\ID$ onto the right half-plane $\RE w=u>0$. Calculations show that
\begin{equation*}
\begin{split}
f_{0}(z)&=(w^2-1)\frac{1}{|w+1|^2}\exp\left(\RE\left(2(w-1)\right)\right)\\
&=\frac{u^2-v^2-1+i 2uv}{(u+1)^2+v^2}\exp(2(u-1)), \quad u>0.
\end{split}
\end{equation*}
Now we observe that:
\begin{enumerate}
\item Each point $z$ such that $|z|=1$ ($z\neq 1$) is carried onto a point $w$ on the imaginary axis so that $u=0$ and $f_{0}(z)=-1/e^2$.
\item If $uv=0$, we observe that the positive real axis
$$\{w=u+iv:\,u>0,v=0\}
$$
is mapped monotonically to the real interval $(-1/e^2,\infty)$.
\item Finally, each hyperbola $uv=c$, where $c\neq 0$ is a real constant, is carried univalently to the set
$$\left\{w_1=\frac{u^2-\left(\frac{c}{u}\right)^2-1}{(u+1)^2}\exp(2(u-1))+i 2c\exp(2(u-1)), \, u>0\right\},
$$
    which is the entire line $\{w_1=u_1+iv_1:\,-\infty<u_1<\infty\}$.
\end{enumerate}
Thus, the log-harmonic Koebe mapping $f_{0}(z)$ is univalent in $\ID$ and maps $\ID$ onto the entire plane minus the real
interval $(-\infty,-1/e^2]$.
\end{case}

\begin{figure}[!h]
  \centering
  \subfigure[The Koebe function $\frac{z}{(1-z)^2}$]
  {\begin{minipage}[b]{0.45\textwidth}
  \includegraphics[height=2.6in,width=2.6in,keepaspectratio]{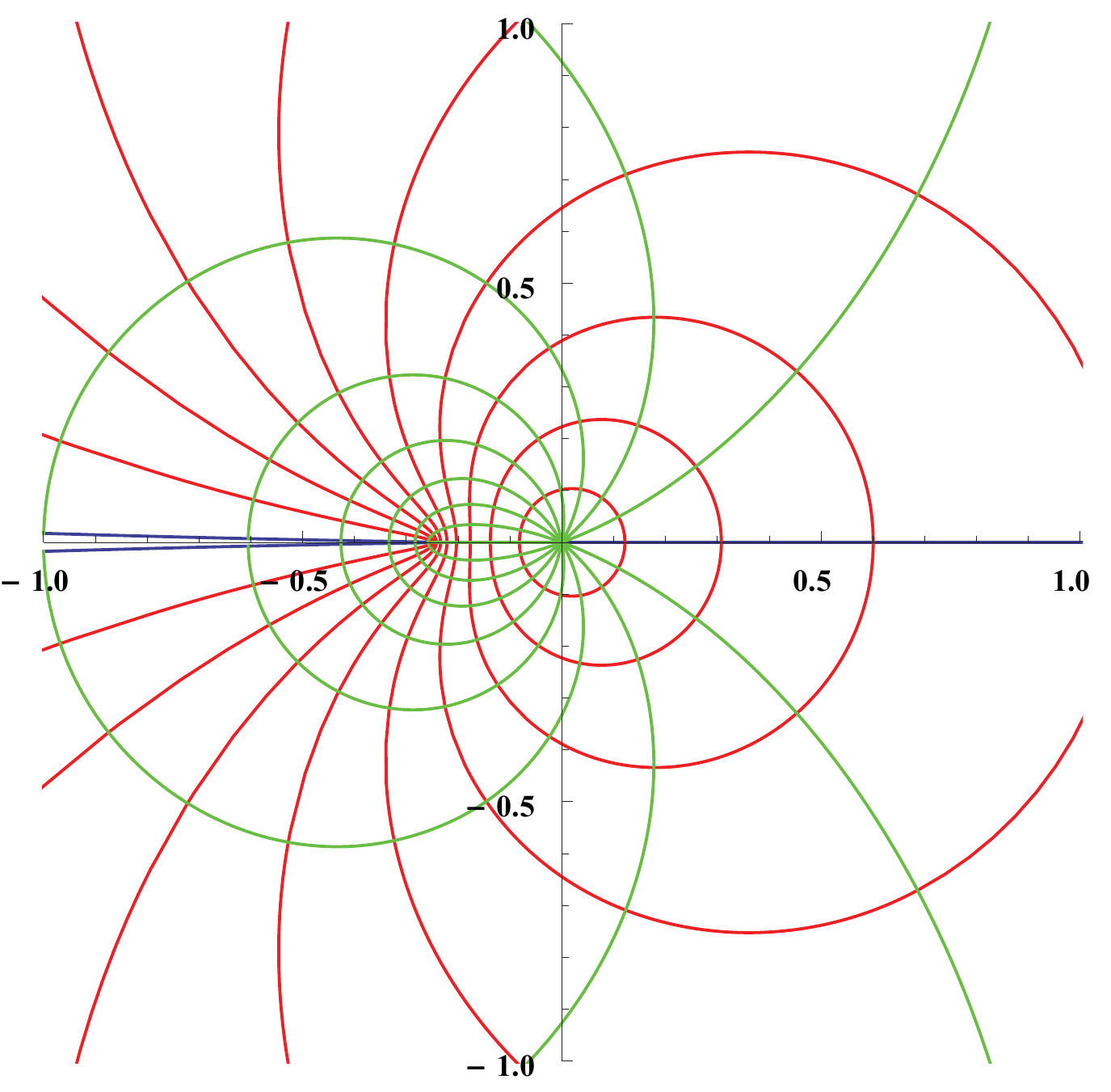}
\end{minipage}}
\subfigure[The log-harmonic Koebe mapping $f_0(z)$]
        {\begin{minipage}[b]{0.45\textwidth}
  \includegraphics[height=2.6in,width=2.6in,keepaspectratio]{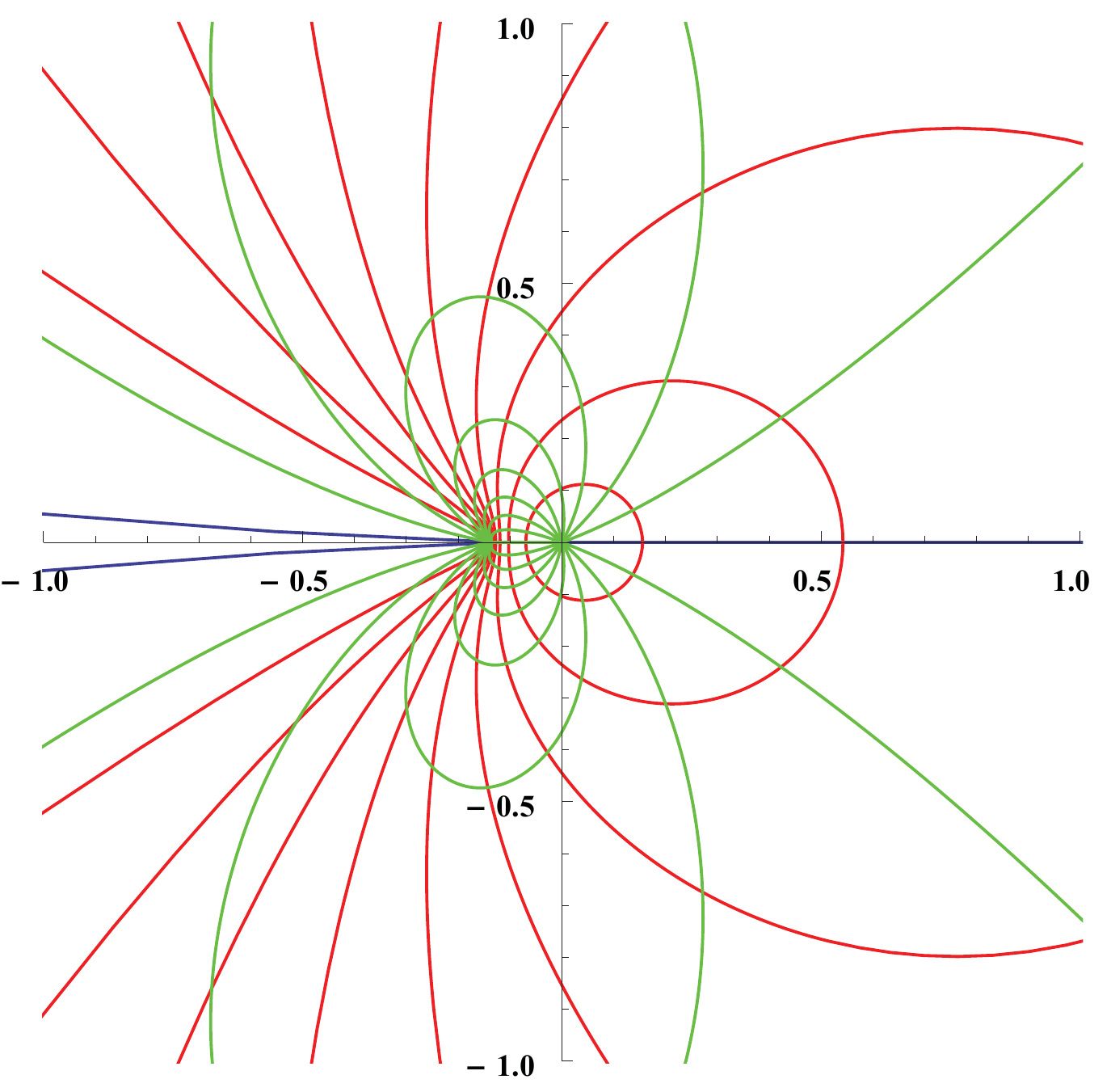}
\end{minipage}}
\caption{Images of $\ID$ under Koebe mapping and log-harmonic Koebe mapping }\label{fLKf}
\end{figure}

\begin{case}
For $\alpha=1/2$, by \eqref{vpha}, we have
\begin{equation*}
 \varphi_{\frac{1}{2}}(z)=\frac{z}{1-z}=l(z)
\end{equation*}
and from \eqref{eqGHA}, we obtain
\begin{equation*}
\left\{
\begin{aligned}
g(z)&=\exp\left(\frac{z}{1-z}\right)
         =\exp\left(\sum_{n=1}^{\infty} z^n\right),\\
h(z)&=\frac{1}{1-z} \exp\left(\frac{z}{1-z}\right)
         =\exp\left(\sum_{n=1}^{\infty}\big(1+\frac{1}{n}\big)z^n\right).
\end{aligned}
\right.
\end{equation*}
From \eqref{eqfa}, we obtain the univalent \textbf{log-harmonic right half-plane mapping}
\begin{equation}\label{LRfeq}
f_{\frac{1}{2}}(z)=\varphi(z)|g(z)|^2
=\frac{z}{1-z}\exp\left(\RE\left(\frac{2z}{1-z}\right)\right).
\end{equation}

We claim now that $f_{\frac{1}{2}}(\ID)=\left\{w:\, \RE w>-\frac{1}{2e}\right\}$. In order to prove this fact, we set
$z=e^{i\theta}~(0<\theta<2\pi)$ into~\eqref{LRfeq}. Then a straightforward calculation shows that
\begin{equation*}
\RE\left\{f_{\frac{1}{2}}(e^{i\theta})\right\}=-\frac{1}{2e}\quad{\text{and}}\quad
\IM\left\{f_{\frac{1}{2}}(e^{i\theta})\right\}=\frac{1}{2e}\cot\frac{\theta}{2}\in\mathbb{R},
\end{equation*}
so that $f_{\frac{1}{2}}(z)$ maps the unit circle $|z|=1$ ($z\neq 1$) onto the line $u=-\frac{1}{2e}$. With
$\zeta=\frac{z}{1-z}=a+i b$, where $a>-1/2,\,-\infty<b<\infty$ for $z\in\ID$, the log-harmonic mapping $f_{\frac{1}{2}}(z)$
takes the form
\begin{equation*}
f_{\frac{1}{2}}(z)=(a+i b)\exp(2a),\quad z=l^{-1}(\zeta)=\frac{\zeta}{1+\zeta}.
\end{equation*}
This shows that $f_{\frac{1}{2}}\circ l^{-1}$ maps each vertical line
$$\zeta=a_{0}+i b, \quad a_{0}>-1/2,\,-\infty<b<\infty,
$$
monotonically onto
$$\{w=u+i v:\, u=a_{0}\exp(2a_0)>-\frac{1}{2e},\, -\infty<v=b\exp(2a_0)<\infty\},
$$
where these lines correspond to circles in the unit disk. This shows that the mapping $w=f_{\frac{1}{2}}(z)$ sends $\ID$
univalently onto the right half-plane $\RE w>-\frac{1}{2e}$. The images under $f_{\frac{1}{2}}(z)$ of concentric circles and
radial segments are shown in Figure~\ref{LRf}. For a comparison we include the images of analytic right half-plane mapping and
the log-harmonic right half-plane mapping. See Figure~\ref{Rf}.
\end{case}
\begin{figure}[!h]
  \centering
  \subfigure[The right half-plane mapping $\frac{z}{1-z}$]
  {\begin{minipage}[b]{0.45\textwidth}
  \includegraphics[height=2.6in,width=2.6in,keepaspectratio]{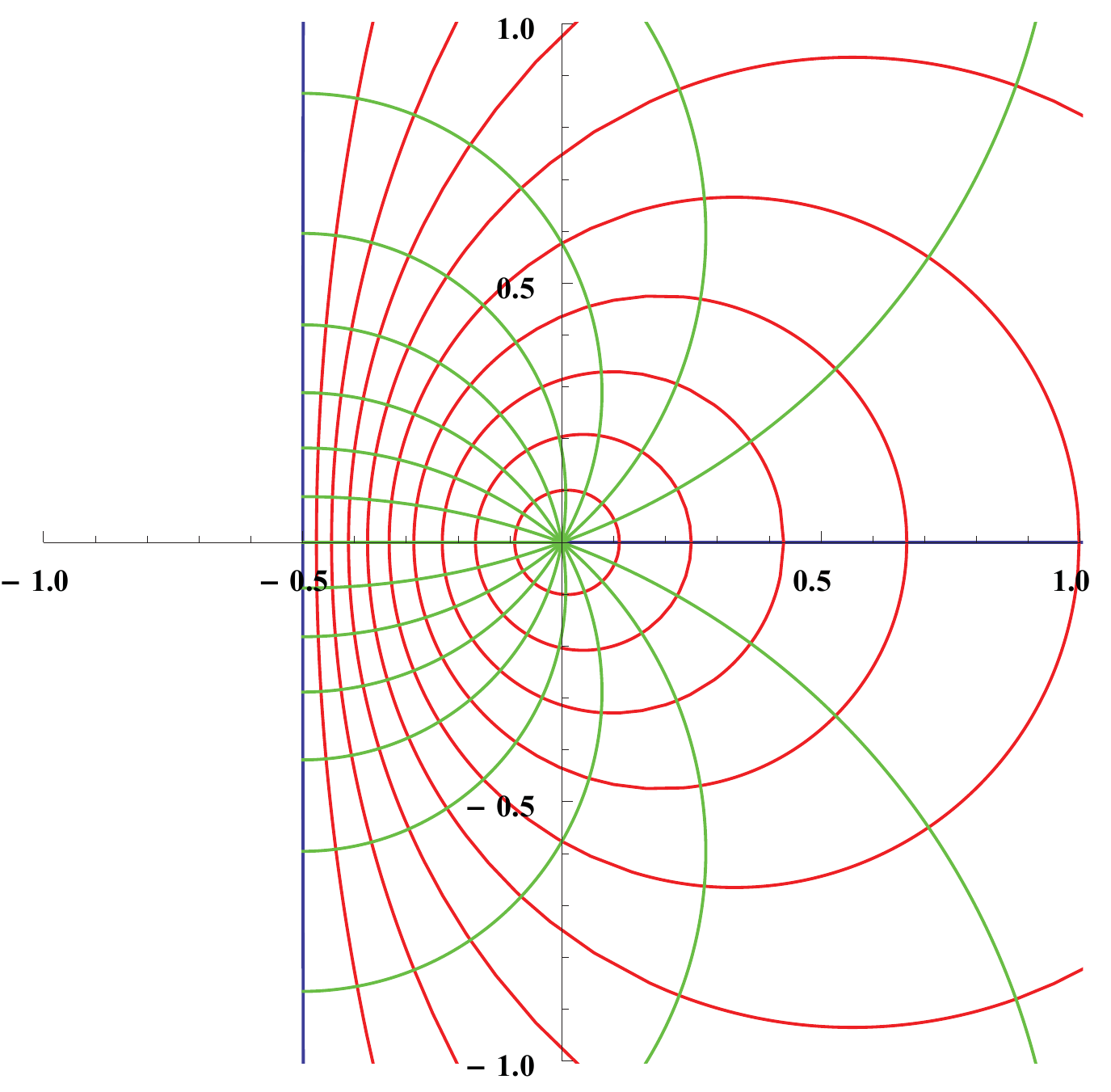}\label{Rf}
\end{minipage}}
\subfigure[The log-harmonic right half-plane mapping $f_{\frac{1}{2}}(z)$]
        {\begin{minipage}[b]{0.45\textwidth}
  \includegraphics[height=2.6in,width=2.6in,keepaspectratio]{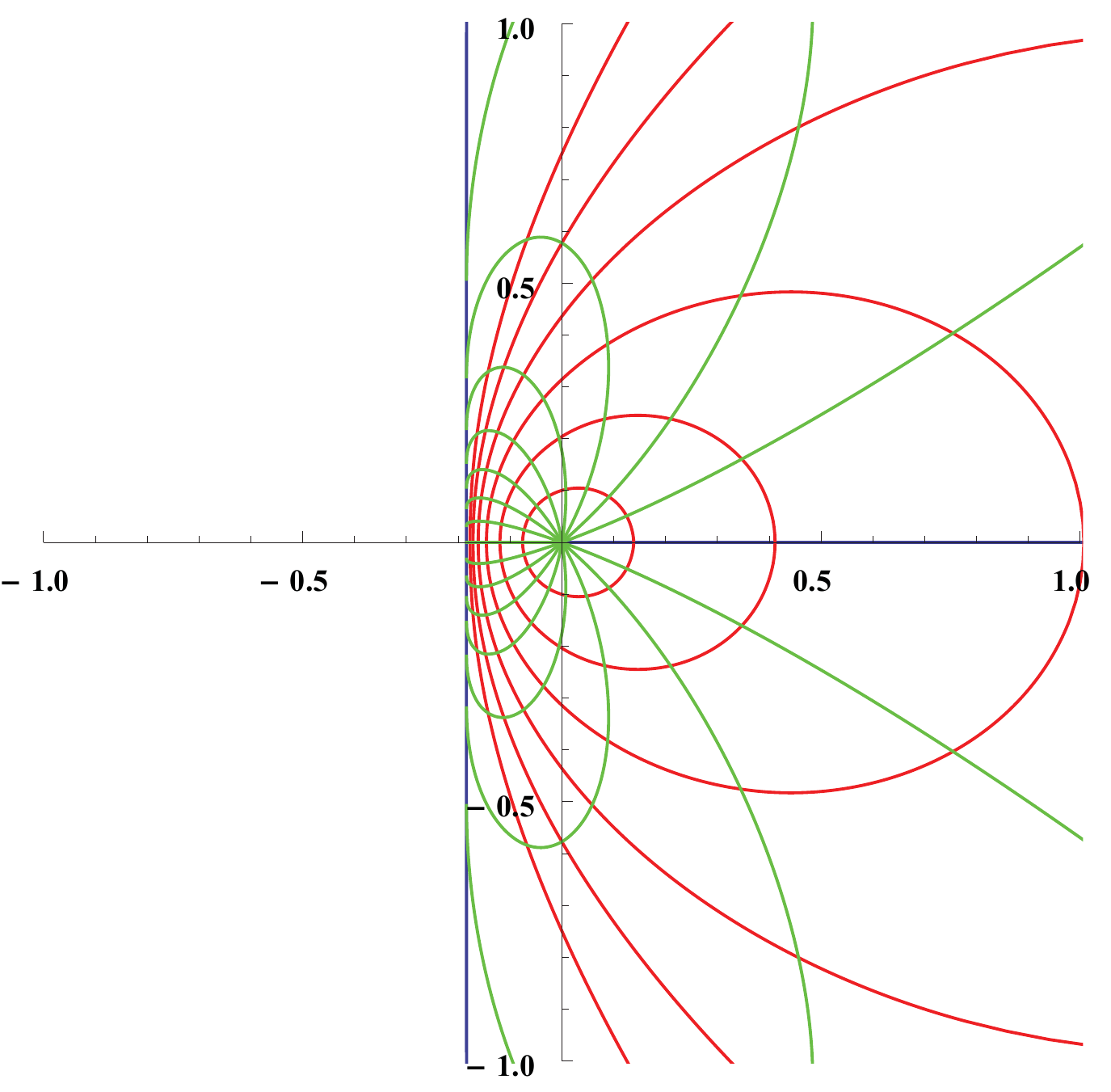}\label{LRf}
\end{minipage}}
\caption{Images of $\ID$ under right half-plane mapping and log-harmonic right half-plane mapping }\label{fLRf}
\end{figure}
\end{example}

\begin{example}({\bf Log-harmonic two-slits mapping}) We know that $s(z)=z/(1-z^2)$ maps $\ID$ onto
$\mathbb{C}\backslash\{u+iv:\,u=0,|v|\geq 1/2\}$. Let us now construct a log-harmonic two-slits mapping. Let
$LS(z)=zh(z)\overline{g(z)}\in\mathcal{S}_{Lh}$ with
\begin{equation*}
\varphi(z)=\frac{zh(z)}{g(z)}=s(z)=\frac{z}{1-z^2}\quad{\rm and}\quad
\mu(z)=\frac{zg'(z)/g(z)}{1+z h'(z)/h(z)}=z^2.
\end{equation*}
Thus, as before, we have
\begin{equation*}
z\left(\log h\right)'(z)-z\left(\log g\right)'(z)=\frac{2z^2}{1-z^2}\quad{\rm and}\quad
\left(\log g\right)'(z)-z^2 \left(\log h\right)'(z)=z.
\end{equation*}
Solving for the solution yields
$$\left(\log g\right)'(z)=\frac{z(1+z^2)}{(1-z^2)^2}.
$$
Integrating with the normalization $h(0)=g(0)=1$, we arrive at
\begin{equation*}
\left\{
\begin{aligned}
g(z)&=\sqrt{1-z^2}\exp\left(\frac{z^2}{1-z^2}\right)
=\exp\left(\sum_{n=1}^{\infty}\big(1-\frac{1}{2n}\big)z^{2n}\right),\\
h(z)&=\frac{g(z)}{1-z^2}=\frac{1}{\sqrt{1-z^2}}\exp\left(\frac{z^2}{1-z^2}\right)
=\exp\left(\sum_{n=1}^{\infty}\big(1+\frac{1}{2n}\big)z^{2n}\right),
\end{aligned}\right.
\end{equation*}
and thus,
\begin{equation}\label{LHTSM}
LS(z)=\varphi(z)|g(z)|^2
=\frac{z}{1-z^2}|1-z^2|\exp\left(\RE\left(\frac{2z^2}{1-z^2}\right)\right).
\end{equation}

By Theorem~\ref{thmLSS}, we know that $LS(z)$ is univalent and starlike in $\ID$. We now claim that $LS(z)$ maps $\ID$ onto
the two-slits plane $\mathbb{C}\backslash\{u+iv:\, |v|\geq 1/e\}$. The images of $\ID$ under $s(z)=z/(1-z^2)$ and log-harmonic
two-slits mapping $LS(z)$ are shown in Figures~\ref{S2f} and~\ref{LS2f}, respectively.

In order to prove that $LS(\ID)=\mathbb{C}\backslash\{u+iv:\, |v|\geq 1/e,u=0\}$, set $z=e^{i\theta}~(\theta\in(0,\pi)\cup(\pi,2\pi))$
into~\eqref{LHTSM}. We have
\begin{equation*}
LS(e^{i\theta})=i \frac{\sin\theta}{|\sin\theta|}e^{-1}
=\left\{ \begin{aligned}
         i/e&\quad{\rm for}\quad 0<\theta<\pi,  \\
        -i/e&\quad{\rm for}\quad \pi<\theta<2\pi,
                          \end{aligned} \right.
\end{equation*}
which shows that
$LS(z)= \pm i/e$ on the unit circle except at the points $z=\pm 1$. The argument similar to the analysis of Example~\ref{falpha}
gives the desired claim and we omit the details.
\end{example}
\begin{figure}[!h]
  \centering
  \subfigure[The two-slits function $\frac{z}{1-z^2}$]
  {\begin{minipage}[b]{0.45\textwidth}
  \includegraphics[height=2.6in,width=2.6in,keepaspectratio]{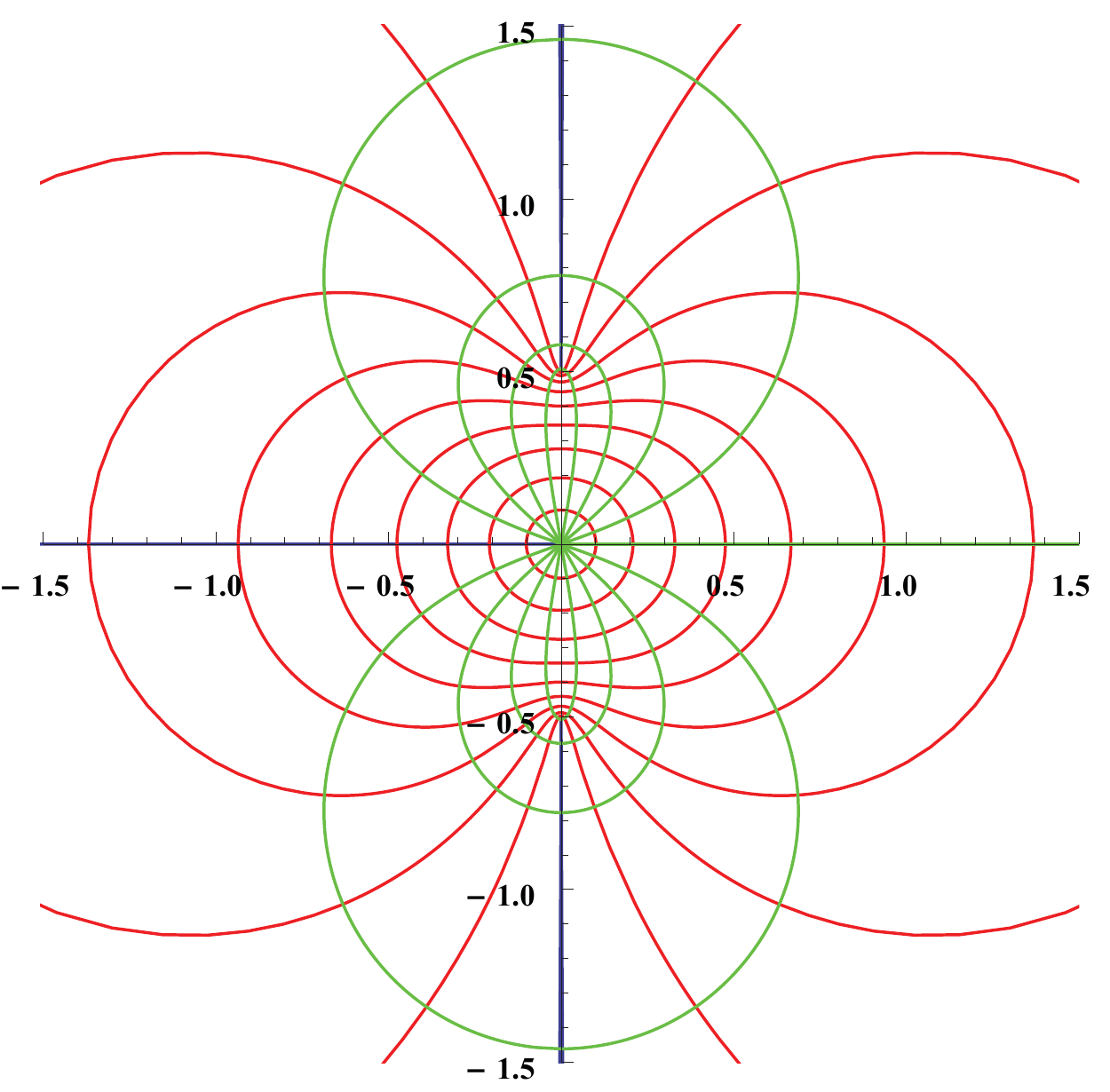}\label{S2f}
\end{minipage}}
\subfigure[The log-harmonic two-slits function $LS(z)$]
        {\begin{minipage}[b]{0.45\textwidth}
  \includegraphics[height=2.6in,width=2.6in,keepaspectratio]{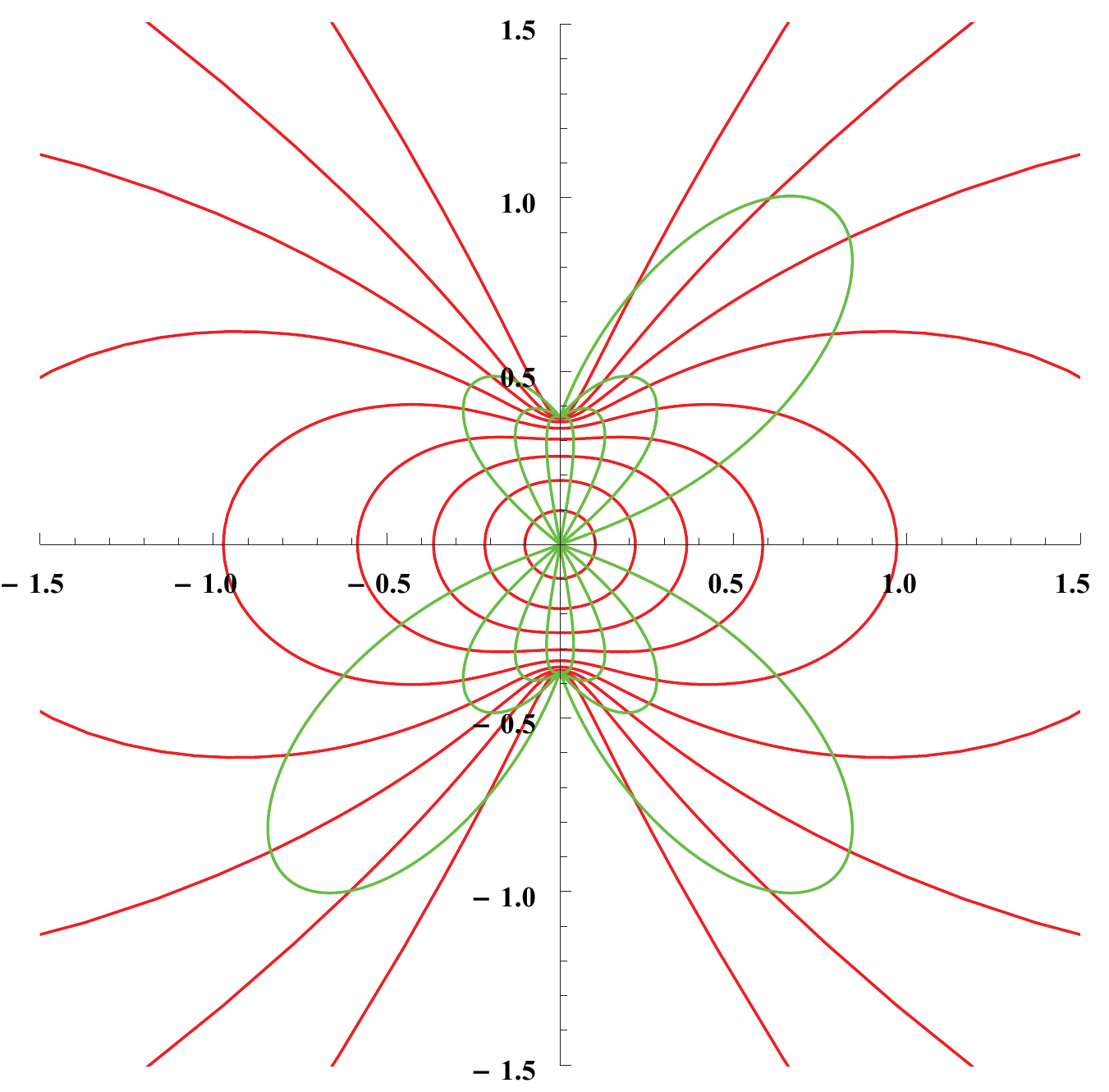}\label{LS2f}
\end{minipage}}
\caption{Images of $\ID$ under $s(z)$ and $LS(z)$}\label{fLSf}
\end{figure}

\begin{remark}
The above three univalent log-harmonic mappings play the role of extremal functions for many extremal problems
over the subclasses of $\mathcal{S}_{Lh}$.
\end{remark}
\section{Coefficients estimate for log-harmonic starlike mappings}\label{Sec3}

Let $s_1(z)=\sum_{n=0}^{\infty}a_n z^n$ and $s_2(z)=\sum_{n=0}^{\infty}b_n z^n$ be analytic functions in $\ID$. We say that $s_1(z)$
is \emph{subordinate} to $s_2(z)$ (written by $s_1(z)\prec s_2(z)$) if
$$s_1(z)=s_2(\omega(z))
$$
for some analytic function $\omega:\,\ID\rightarrow \ID$ with $\omega(0)=0$. Then, by the Schwarz lemma, $|\omega (z)|\leq |z|$ and
$|\omega '(0)|\leq 1$ so that  $|s_1'(0)|\leq |s_2'(0)|$. For additional details on subordination classes,
see for example~\cite[Chapter 6]{Duren1983} or~\cite[p. 35]{Pommerenke1975}.

\begin{lemma}\label{SCSB}{\rm (\cite[Theorem 6.4]{Duren1983})}
Let $s_1(z)\prec s_2(z)$, where $s_1(0)=s'_1(0)-1=0$ and $s_2(0)=s'_2(0)-1=0$. Then we have the following:
\begin{enumerate}
\item if $s_2\in \mathcal{C}$, then $|a_n|\leq 1$ for $n=2,3,\cdots$;
\item if $s_2\in \mathcal{S^{*}}$, then $|a_n|\leq n$ for $n=2,3,\cdots$.
\end{enumerate}
\end{lemma}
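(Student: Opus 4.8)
The plan is to reduce both statements to one Schwarz-lemma estimate: I would prove (1) directly by an averaging argument and then deduce (2) from (1) by passing through the Alexander transform.

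For part (1), fix $n\ge 2$, set $\varepsilon=e^{2\pi i/n}$, and form the average
$$F(z)=\frac1n\sum_{j=0}^{n-1}s_1(\varepsilon^{j}z)=a_nz^{n}+a_{2n}z^{2n}+\cdots ,$$
which retains exactly the degrees divisible by $n$. Writing $s_1=s_2\circ\omega$ with $\omega:\ID\to\ID$, $\omega(0)=0$, each value $s_1(\varepsilon^{j}z)$ lies in the convex region $s_2(\ID)$, hence so does the convex combination $F(z)$; since moreover $F(0)=0=s_2(0)$ and $s_2$ is univalent, $\psi:=s_2^{-1}\circ F$ is a well-defined analytic self-map of $\ID$ fixing the origin, i.e.\ $F=s_2\circ\psi$. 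Now compare orders of vanishing: if $a_n\ne 0$ then $F$ has a zero of order exactly $n$ at the origin, and since $s_2(w)=w+b_2w^{2}+\cdots$ the composition $s_2\circ\psi$ vanishes to the same order as $\psi$, so $\psi(z)=c\,z^{n}+\cdots$; consequently the terms $b_k\psi^{k}$ with $k\ge 2$ contribute nothing below degree $2n$, and matching the coefficients of $z^{n}$ gives $a_n=c$. Finally $|c|\le 1$ by the Cauchy coefficient estimate for the bounded function $\psi$, which is the assertion. (Without normalizing $s_2'(0)=1$ the same computation gives $a_n=s_2'(0)\,c$, hence $|a_n|\le|s_2'(0)|$; note that $s_1'(0)=1$ is never used.)

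For part (2), I would apply the Alexander operator $A[\Phi](z)=\int_0^{z}\Phi(t)\,t^{-1}\,dt$, which is Hadamard convolution with the normalized convex function $-\log(1-z)$. By Alexander's theorem $s_2\in\mathcal S^{*}$ gives $A[s_2]\in\mathcal C$, and by the theorem of Ruscheweyh and Sheil-Small that convolution with a convex function preserves subordination, $s_1\prec s_2$ gives $A[s_1]\prec A[s_2]$. Both functions are normalized with derivative $1$ at the origin, so part (1) applies and yields $|[z^{n}]A[s_1]|\le 1$; since $[z^{n}]A[s_1]=a_n/n$, this is exactly $|a_n|\le n$.

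The order-of-vanishing bookkeeping in (1) and the coefficient identity $[z^{n}]A[s_1]=a_n/n$ in (2) are routine. The step I expect to be the real point is the subordination-preserving property of convolution with a convex function used in (2): it cannot be dispensed with, since the naive chain --- Littlewood's subordination theorem for the integral means $M_1(r,\Phi)=\frac1{2\pi}\int_0^{2\pi}|\Phi(re^{i\theta})|\,d\theta$, the estimate $M_1(r,s_2)\le r/(1-r^{2})$ for univalent $s_2$, and $|a_n|\le M_1(r,s_1)\,r^{-n}$ optimized over $r$ --- only delivers $|a_n|=O(n)$ with constant asymptotically $e/2>1$, short of the sharp bound. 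A self-contained version would have to establish $A[s_1]\prec A[s_2]$ directly for this particular convex kernel.
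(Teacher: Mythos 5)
You should first note that the paper offers no proof of this lemma at all: it is quoted verbatim from Duren's book (Theorem 6.4), where both parts are classical theorems of Rogosinski. So I am comparing your argument against the standard textbook proof rather than against anything in the paper.

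Your part (1) is correct and complete, and it is in fact the classical proof: the $n$-fold rotational average $F(z)=\frac1n\sum_j s_1(\varepsilon^j z)$ stays in the convex set $s_2(\ID)$, hence $F\prec s_2$, the order-of-vanishing comparison gives $a_n=[z^n]\psi$ for the Schwarz function $\psi=s_2^{-1}\circ F$, and the coefficient bound for a bounded function finishes it. Nothing to object to there.

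Part (2) is where the gap is, and it sits exactly at the step you yourself flag as the crux. The reduction is fine: $A[s_1]=s_1*(-\log(1-z))$, the kernel $-\log(1-z)$ is convex, $A[s_2]\in\mathcal{C}$ by Alexander, $[z^n]A[s_1]=a_n/n$, so everything follows once $A[s_1]\prec A[s_2]$ is known. But the theorem of Ruscheweyh and Sheil-Small that you invoke --- convolution with a convex function preserves subordination --- is stated (and proved) under the hypothesis that the \emph{superordinate} function is convex as well: if $\phi\in\mathcal{C}$, $g\in\mathcal{C}$ and $f\prec g$, then $\phi*f\prec\phi*g$. Here $s_2$ is only starlike, so that theorem does not apply as cited, and its proof genuinely uses convexity of $g$ (via the separation/duality argument for the convex image $(\phi*g)(\ID)$). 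The statement you actually need is true, but it is a different result: Suffridge's integral-subordination lemma (see also Miller and Mocanu), namely that if $p(0)=0$ and $zp'(z)\prec h(z)$ with $h$ starlike, then $p(z)\prec\int_0^z h(t)t^{-1}\,dt$, the latter being convex; applying this with $p=A[s_1]$ and $h=s_2$ gives precisely $A[s_1]\prec A[s_2]$. So your proof of (2) is reparable by substituting the correct lemma (or proving it), but as written the key subordination rests on a misapplied theorem, and since you explicitly decline to establish $A[s_1]\prec A[s_2]$ yourself, part (2) is not yet a proof. For what it is worth, the classical treatments also derive the starlike case from the convex case, but through a device tailored to starlikeness rather than through the general convolution theorem.
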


\begin{theorem}\label{thmSTLh}
Let $f(z)=zh(z)\overline{g(z)}$ belong to $\mathcal{S}_{Lh}^{*}(\alpha)~(0\leq\alpha<1)$, where $h(z)$ and $g(z)$
are given by~\eqref{LHBJC}. Then
$$\left|a_{n}-b_{n}\right|\leq \frac{2(1-\alpha)}{n}\quad{\rm for}\,\, n\geq 1.
$$
Equality holds if $f(z)=f_{\alpha}(z)$ or one of its rotation, where where $f_{\alpha}(z)$ is given by~\eqref{eqfa}.
\end{theorem}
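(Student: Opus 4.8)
The plan is to pass, via Theorem~\ref{thmLSS}, to the associated conformal map $\varphi(z)=zh(z)/g(z)$, reinterpret the inequality as a bound on the logarithmic coefficients of a starlike function of order $\alpha$, and then close with the classical coefficient estimate for functions with positive real part. Concretely, by~\eqref{LHBJC},
\[
\varphi(z)=\frac{zh(z)}{g(z)}=z\exp\!\left(\sum_{n=1}^{\infty}(a_n-b_n)\,z^n\right),
\]
so that, setting $c_n:=a_n-b_n$, one has $\log\!\big(\varphi(z)/z\big)=\sum_{n=1}^{\infty}c_n z^n$. Since $f\in\mathcal{S}^{*}_{Lh}(\alpha)$, Theorem~\ref{thmLSS} gives $\varphi\in\mathcal{S}^{*}(\alpha)$, i.e.
\[
w(z):=\frac{z\varphi'(z)}{\varphi(z)}=1+\sum_{n=1}^{\infty}n\,c_n z^n \quad\text{satisfies}\quad \RE w(z)>\alpha \ \ (z\in\ID).
\]
Thus the estimate $|a_n-b_n|\le 2(1-\alpha)/n$ is equivalent to the bound $|n\,c_n|\le 2(1-\alpha)$ on the Taylor coefficients of $w(z)-1$.

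To obtain this bound I would use the positivity of $\RE w$. As $w(0)=1$, the function $P(z):=(w(z)-\alpha)/(1-\alpha)$ has $P(0)=1$ and $\RE P(z)>0$ on $\ID$; writing $P(z)=1+\sum_{n\ge 1}p_n z^n$, the classical coefficient inequality for functions of positive real part gives $|p_n|\le 2$. Comparing $w(z)=\alpha+(1-\alpha)P(z)=1+(1-\alpha)\sum_{n\ge 1}p_n z^n$ with the expansion of $w$ above yields $n\,c_n=(1-\alpha)\,p_n$, and therefore
\[
|a_n-b_n|=|c_n|\le\frac{2(1-\alpha)}{n}\qquad (n\ge 1).
\]
Alternatively, $\RE w>\alpha$ is the same as $(w(z)-1)/\big(2(1-\alpha)\big)\prec z/(1-z)$, and since $z/(1-z)$ is a convex univalent map of $\ID$ onto the half-plane $\{\RE\zeta>-1/2\}$ with all Taylor coefficients equal to $1$, Rogosinski's coefficient inequality (cf.~\cite[Chapter~6]{Duren1983}, in the spirit of Lemma~\ref{SCSB}) delivers the same conclusion.

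For sharpness I would compute the extremal case directly. For $f_\alpha$ given by~\eqref{eqfa} the conformal factor is $\varphi_\alpha(z)=z/(1-z)^{2(1-\alpha)}$, whence $\log\!\big(\varphi_\alpha(z)/z\big)=-2(1-\alpha)\log(1-z)=\sum_{n\ge 1}\frac{2(1-\alpha)}{n}z^n$; this forces $a_n-b_n=2(1-\alpha)/n$, so equality holds for every $n$, and likewise for the rotations $e^{-i\theta}f_\alpha(e^{i\theta}z)$, whose coefficients are $a_n e^{in\theta}$ and $b_n e^{in\theta}$. The only step that needs care is that the factor $n$ must be produced \emph{before} applying the positive-real-part (or Rogosinski) bound: a direct estimate of $\log(\varphi(z)/z)$ would merely give $|c_n|\le 2(1-\alpha)$ and lose the decisive $1/n$, so it is essential to differentiate and argue with the logarithmic derivative $w$.
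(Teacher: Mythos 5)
Your proof is correct and follows essentially the same route as the paper: both reduce the claim to the fact that $1+\sum_{n\ge 1}n(a_n-b_n)z^n = z\varphi'(z)/\varphi(z)$ has real part exceeding $\alpha$ and then invoke a classical coefficient bound, the paper via Lemma~\ref{SCSB} applied to the subordination $\sum \frac{n(a_n-b_n)}{2(1-\alpha)}z^n\prec \frac{z}{1-z}$, you via the equivalent Carath\'eodory inequality $|p_n|\le 2$ (and you even note the subordination route as an alternative). The sharpness computation for $f_\alpha$ matches the paper's claim as well.
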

\begin{proof}
Let $f(z)=zh(z)\overline{g(z)}$ be an element of $\mathcal{S}_{Lh}^{*}(\alpha)$. Then we have
\begin{equation*}
\alpha<\RE\left(\frac{zf_{z}(z)-\overline{z}f_{\overline{z}}(z)}{f(z)}\right)
=\RE\left(1+\frac{zh'(z)}{h(z)}-\frac{zg'(z)}{g(z)}\right), \quad z\in\ID.
\end{equation*}
By ~\eqref{LHBJC} and Theorem ~\ref{thmLSS}, we obtain
\begin{equation*}
1+\frac{zh'(z)}{h(z)}-\frac{zg'(z)}{g(z)}\prec \frac{1+(1-2\alpha)z}{1-z}, \quad z\in\ID,
\end{equation*}
which is equivalent to
$$\frac{1}{2(1-\alpha)}\left(\frac{zh'(z)}{h(z)}-\frac{zg'(z)}{g(z)}\right)
=\sum_{n=1}^{\infty}\frac{n(a_n-b_n)}{2(1-\alpha)}z^n\prec \frac{z}{1-z}, \quad z\in\ID.
$$
Also, since $z/(1-z)$ is convex in $\ID$, by Lemma~\ref{SCSB}, we get
$$\frac{n\left|a_n-b_n\right|}{2(1-\alpha)}\leq 1\quad{\rm for}\quad n\geq 1,
$$
and the desired coefficient inequality follows.

Finally, it is evident that the equalities are attained by a suitable rotation of $h(z)$ and $g(z)$ given by \eqref{eqfa}.
The proof is complete.
\end{proof}

For $\alpha =0$, one has the following.

\begin{corollary}\label{corSTLh}
Let $f(z)=zh(z)\overline{g(z)}$ be an element of  $\mathcal{S}_{Lh}^{*}$, where $h(z)$ and $g(z)$ are given by~\eqref{LHBJC}. Then
$$\left|a_{n}-b_{n}\right|\leq \frac{2}{n} ~\mbox{ for $n\geq 1$}.
$$
Equality holds if $f(z)=f_{0}(z)$ or one of its rotation, where $f_{0}(z)$ is given by~\eqref{LKfeq}.
\end{corollary}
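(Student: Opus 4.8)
The plan is to obtain Corollary \ref{corSTLh} as the special case $\alpha=0$ of Theorem \ref{thmSTLh}. Substituting $\alpha=0$ into the bound $|a_n-b_n|\le 2(1-\alpha)/n$ immediately gives $|a_n-b_n|\le 2/n$ for $n\ge 1$. The one point that deserves a separate line is that the extremal mapping $f_\alpha$ of \eqref{eqfa} reduces, when $\alpha=0$, to the log-harmonic Koebe function $f_0$ of \eqref{LKfeq}; this identification is exactly what is recorded in the case $\alpha=0$ of Example \ref{falpha}, so equality is inherited from Theorem \ref{thmSTLh}.

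For a self-contained argument I would simply re-run the proof of Theorem \ref{thmSTLh} with $\alpha=0$. Starting from $f\in\mathcal{S}_{Lh}^{*}$, the definition of starlikeness together with Theorem \ref{thmLSS} gives $\RE\big(1+zh'(z)/h(z)-zg'(z)/g(z)\big)>0$ on $\ID$, hence the subordination $1+zh'/h-zg'/g\prec (1+z)/(1-z)$. Using the series \eqref{LHBJC}, this is equivalent to
$$\frac12\left(\frac{zh'(z)}{h(z)}-\frac{zg'(z)}{g(z)}\right)=\sum_{n=1}^{\infty}\frac{n(a_n-b_n)}{2}z^n\prec\frac{z}{1-z},\quad z\in\ID.$$
Since $z/(1-z)\in\mathcal{C}$, part (1) of Lemma \ref{SCSB} yields $n|a_n-b_n|/2\le 1$, i.e. $|a_n-b_n|\le 2/n$ for $n\ge 1$.

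To confirm sharpness I would read off the coefficients of $f_0$ from \eqref{eqGHAK}: there $a_n=2-\tfrac1n$ and $b_n=2+\tfrac1n$, so $a_n-b_n=-\tfrac2n$ and $|a_n-b_n|=\tfrac2n$ for every $n\ge 1$; any rotation of $f_0$ clearly realizes equality as well. Since the statement is a verbatim specialization of Theorem \ref{thmSTLh}, I do not expect any real obstacle here — the only point requiring a moment of care is matching the normalized extremal function $f_\alpha|_{\alpha=0}$ with the named log-harmonic Koebe function $f_0$, which the paper has already done.
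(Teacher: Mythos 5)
Your proposal is correct and matches the paper exactly: the paper states Corollary \ref{corSTLh} as the immediate specialization $\alpha=0$ of Theorem \ref{thmSTLh} with no separate proof, and your verification of sharpness via the coefficients $a_n=2-\tfrac1n$, $b_n=2+\tfrac1n$ of $f_0$ from \eqref{eqGHAK} is accurate.
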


\begin{theorem}\label{thmSSTLh}
Let $f(z)=zh(z)\overline{g(z)}$ be a log-harmonic mapping in $\ID$, where $h(z)$ and $g(z)$ are given by~\eqref{LHBJC}, and satisfy the condition
\begin{equation}\label{CDSumS}
\sum_{n=1}^{\infty}n\left|a_n-b_n\right|\leq 1-\alpha
\end{equation}
for some $\alpha\in[0,1)$. Then $f\in\mathcal{S}_{Lh}^{*}(\alpha)$.
\end{theorem}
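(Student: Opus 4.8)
The plan is to reduce everything to the analytic characterization supplied by Theorem~\ref{thmLSS}. First I would record what starlikeness of order $\alpha$ means here: since $h(z)=\exp\big(\sum_{n\ge 1}a_nz^n\big)$ and $g(z)=\exp\big(\sum_{n\ge 1}b_nz^n\big)$, logarithmic differentiation gives $zh'(z)/h(z)=\sum_{n\ge 1}na_nz^n$ and $zg'(z)/g(z)=\sum_{n\ge 1}nb_nz^n$, so that with $\varphi(z)=zh(z)/g(z)$ one has
$$
\frac{z\varphi'(z)}{\varphi(z)}=1+\frac{zh'(z)}{h(z)}-\frac{zg'(z)}{g(z)}=1+\sum_{n=1}^{\infty}n(a_n-b_n)z^n .
$$
The summability hypothesis~\eqref{CDSumS} already forces $\sum|a_n-b_n|<\infty$, so all these power series converge absolutely on $\ID$ and $h,g,\varphi$ are genuine analytic, non-vanishing functions there; in particular $0\notin(hg)(\ID)$, which is exactly the hypothesis needed to invoke Theorem~\ref{thmLSS}.

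Next I would carry out the standard Silverman-type tail estimate. For any $z\in\ID$, using $|z|<1$ and~\eqref{CDSumS},
$$
\left|\sum_{n=1}^{\infty}n(a_n-b_n)z^n\right|\le\sum_{n=1}^{\infty}n|a_n-b_n|\,|z|^n<\sum_{n=1}^{\infty}n|a_n-b_n|\le 1-\alpha .
$$
Therefore
$$
\RE\left(\frac{z\varphi'(z)}{\varphi(z)}\right)=1+\RE\left(\sum_{n=1}^{\infty}n(a_n-b_n)z^n\right)\ge 1-\left|\sum_{n=1}^{\infty}n(a_n-b_n)z^n\right|>1-(1-\alpha)=\alpha
$$
for every $z\in\ID$, i.e. $\varphi\in\mathcal{S}^{*}(\alpha)$.

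Finally I would conclude by quoting Theorem~\ref{thmLSS}: since $\varphi(z)=zh(z)/g(z)\in\mathcal{S}^{*}(\alpha)$ and $0\notin(hg)(\ID)$, the log-harmonic mapping $f(z)=zh(z)\overline{g(z)}$ belongs to $\mathcal{S}^{*}_{Lh}(\alpha)$, as required. I do not expect a genuine obstacle in this argument; the only points demanding a little care are the convergence/interchange-of-summation justifications drawn from~\eqref{CDSumS}, and the observation that it is precisely the strict inequality $|z|<1$ that upgrades the weak bound $\le 1-\alpha$ to the strict inequality $\RE(z\varphi'/\varphi)>\alpha$ needed for membership in the order-$\alpha$ class.
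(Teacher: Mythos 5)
Your argument is correct, and its analytic core is exactly the paper's: both proofs reduce to the chain
\begin{equation*}
1+\RE\Big(\sum_{n\ge 1}n(a_n-b_n)z^n\Big)\;\ge\;1-\Big|\sum_{n\ge 1}n(a_n-b_n)z^n\Big|\;>\;1-\sum_{n\ge 1}n|a_n-b_n|\;\ge\;\alpha ,
\end{equation*}
using $|z|<1$ and \eqref{CDSumS}. The only difference is packaging: the paper applies this estimate directly to $\RE\big((zf_z-\overline{z}f_{\overline{z}})/f\big)=\RE\big(1+zh'/h-\overline{zg'/g}\big)$, verifying the defining inequality for $\mathcal{S}^{*}_{Lh}(\alpha)$ on the nose, whereas you apply it to $z\varphi'/\varphi$ for $\varphi=zh/g$ (same real part) and then invoke Theorem~\ref{thmLSS}. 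Your detour is slightly longer but buys something the paper's proof glosses over: membership in $\mathcal{S}^{*}_{Lh}(\alpha)$ requires univalence of $f$, and checking the differential inequality alone does not address that; since $\RE(z\varphi'/\varphi)>\alpha\ge 0$ makes the analytic function $\varphi$ genuinely starlike (hence univalent), Theorem~\ref{thmLSS} then delivers univalence of $f$ as well. One cosmetic point: the strict middle inequality $|\sum n(a_n-b_n)z^n|<\sum n|a_n-b_n|$ fails in the degenerate case $a_n\equiv b_n$, but there the quantity is identically $1>\alpha$, so the conclusion stands; the paper's proof has the same harmless lapse.
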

\begin{proof}
By using the series representation of $h(z)$ and $g(z)$ given by~\eqref{LHBJC}, we obtain
\begin{eqnarray*}
\RE\left(\frac{zf_{z}(z)-\overline{z}f_{\overline{z}}(z)}{f(z)}\right)
&&=\RE\left(1+\sum_{n=1}^{\infty}n(a_n-b_n)z^n\right)\\
&&\geq 1-\left|\sum_{n=1}^{\infty}n(a_n-b_n)z^n\right|\\
&&> 1-\sum_{n=1}^{\infty}n\left|a_n-b_n\right|\geq \alpha.
\end{eqnarray*}
By ~\eqref{CDSumS}, the desired conclusion follows.
\end{proof}

In particular, $\alpha=0$ in Theorem \ref{thmSSTLh} provides a sufficient coefficient condition for the log-harmonic
mappings of the form $f(z)=zh(z)\overline{g(z)}$ to be starlike in $\ID$.

\section{Growth and Distortion Theorem}\label{Sec4}
In this section, we introduce the subclass $\mathcal{C}_{Lh}$ of $\mathcal{S}_{Lh}$, which yields sharp growth and distortion
estimates, where $\mathcal{C}_{Lh}$ is defined by
\begin{equation*}
\mathcal{C}_{Lh}=\left\{f\in\mathcal{S}_{Lh}:\, f(z)=zh(z)\overline{g(z)},~ \frac{zh(z)}{g(z)} =\frac{z}{1-z}
\right\}.
\end{equation*}

The following two theorems are the growth theorem and distortion theorem for the class $\mathcal{C}_{Lh}$.

\begin{theorem}\label{GCthm}
Let $f(z)=zh(z)\overline{g(z)}\in\mathcal{C}_{Lh}$. Then for $z\in\ID$ we have
\begin{enumerate}
\item $\ds |h(z)|\leq\frac{1}{1-|z|}
    \exp\left(\frac{|z|}{1-|z|}\right)$;
\item $\ds |g(z)|\leq \exp\left(\frac{|z|}{1-|z|}\right)$;
\item $\ds |f(z)|\leq \frac{|z|}{1-|z|}\exp\left(\frac{2|z|}{1-|z|}\right)$.
\end{enumerate}
The equalities occur if and only if $f(z)$ is  of the form $\overline{\eta} f_{\frac{1}{2}}(\eta z),\, |\eta|=1$,
where $f_{\frac{1}{2}}(z)$ is given by~\eqref{LRfeq}.
\end{theorem}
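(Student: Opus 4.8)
The plan is to turn the three bounds into estimates on the logarithmic derivatives of $h$ and $g$ and then integrate along the radial segment $[0,z]$, exactly as in the classical growth theorems for $\mathcal{S}^*$. The point is that the class $\mathcal{C}_{Lh}$ fixes $\varphi(z)=zh(z)/g(z)=z/(1-z)$, so $h(z)=g(z)/(1-z)$, and the construction of Section~\ref{Sec2} gives $g$ explicitly in terms of the dilatation $\mu$. Since $f$ is log-harmonic, $\mu$ is analytic on $\ID$ with $|\mu|<1$, and the normalization $h(0)=g(0)=1$ forces $\mu(0)=0$. Specializing \eqref{EPRLHG} to $\varphi(z)=z/(1-z)$ (for which $\varphi'(z)/\varphi(z)=1/(z(1-z))$) and differentiating yields the two identities
\begin{equation*}
\frac{zg'(z)}{g(z)}=\frac{\mu(z)}{1-\mu(z)}\cdot\frac{1}{1-z},\qquad
\frac{zh'(z)}{h(z)}=\frac{zg'(z)}{g(z)}+\frac{z}{1-z},
\end{equation*}
which are the starting point of the whole argument.

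Next I would estimate these quantities using the Schwarz lemma. From $|\mu(z)|\le|z|$, together with $|1-\mu(z)|\ge 1-|\mu(z)|$ and the monotonicity of $t\mapsto t/(1-t)$ on $[0,1)$, one gets $\left|\frac{\mu(z)}{1-\mu(z)}\right|\le\frac{|z|}{1-|z|}$, and of course $|1-z|^{-1}\le(1-|z|)^{-1}$. Hence
\begin{equation*}
\left|\frac{zg'(z)}{g(z)}\right|\le\frac{|z|}{(1-|z|)^2},\qquad
\left|\frac{zh'(z)}{h(z)}\right|\le\frac{|z|}{(1-|z|)^2}+\frac{|z|}{1-|z|}=\frac{|z|(2-|z|)}{(1-|z|)^2}.
\end{equation*}
Writing $\log g(z)=\int_0^1 \frac1t\,\frac{tz\,g'(tz)}{g(tz)}\,dt$ and similarly for $h$, taking real parts, inserting the bounds, and making the substitution $u=t|z|$ reduces everything to the elementary integrals $\int_0^{|z|}(1-u)^{-2}\,du=\frac{|z|}{1-|z|}$ and $\int_0^{|z|}\frac{2-u}{(1-u)^2}\,du=\frac{|z|}{1-|z|}+\log\frac{1}{1-|z|}$. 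Exponentiating gives (2) and (1) respectively, and then (3) is immediate from $|f(z)|=|z|\,|h(z)|\,|g(z)|$.

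For the sharpness I would trace back through the two inequalities used. Equality in (1)--(3) at some $z_0\neq 0$ forces equality along the whole segment $[0,z_0]$, hence equality in the Schwarz lemma, so $\mu$ must be a unimodular constant times the identity and $z_0$ must lie (after the corresponding rotation) on the positive real axis; substituting back into the displayed formulas returns $g(z)=\exp\bigl(z/(1-z)\bigr)$ and $h(z)=(1-z)^{-1}\exp\bigl(z/(1-z)\bigr)$, that is, $f=f_{\frac12}$ up to a rotation $f\mapsto\overline\eta f(\eta z)$, and conversely $f_{\frac12}$ given by~\eqref{LRfeq} attains equality at every $z\in(0,1)$. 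The estimates themselves are completely routine once the identity for $(\log g)'$ is available; the only place that needs genuine care is this equality analysis — in particular verifying that equality really does pin down $\mu$ up to a unimodular factor and checking which rotated functions remain in $\mathcal{C}_{Lh}$ — so that is the step I would expect to occupy most of the write-up.
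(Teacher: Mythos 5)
Your proposal is correct and follows essentially the same route as the paper: specialize \eqref{EPRLHG} to $\varphi(z)=z/(1-z)$, apply the Schwarz lemma to $\mu$ to bound the integrand by $(1-|s|)^{-2}$, integrate radially to get the bound on $|g|$, and trace equality back to $\mu(z)=\eta z$. The only (immaterial) difference is that the paper obtains (1) directly from $|h(z)|=|g(z)|/|1-z|$ rather than by integrating $(\log h)'$, which gives the same bound.
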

\begin{proof}
Let $f(z)=zh(z)\overline{g(z)}\in\mathcal{C}_{Lh}$. It follows from ~\eqref{EPRLHG} that
\begin{equation}\label{eqG}
g(z)=\exp\left(\int_{0}^{z}\left(\frac{\mu(s)}{1-\mu(s)}
\cdot\frac{1}{s(1-s)}\right)\,ds\right),
\end{equation}
where $\mu\in \mathcal{B}$ such that $\mu(0)=0$, and by the definition of $\mathcal{C}_{Lh}$,
\begin{equation}\label{eqHF}
h(z)=\frac{1}{1-z}g(z)\quad{\rm and}\quad f(z)=\frac{z}{1-z}|g(z)|^2.
\end{equation}
Because $\mu\in \mathcal{B}$ with $\mu(0)=0$, for $|z|=r$, we have
$$\left|\frac{\mu(z)}{z(1-\mu(z))}\right|\leq \frac{1}{1-r}\quad{\rm and}\quad \left|\frac{1}{1-z}\right|\leq\frac{1}{1-r},
$$
which imply that
$$|g(z)|\leq\exp\left(\int_{0}^{r}\frac{1}{(1-t)^2}\,dt\right)
=\exp\left(\frac{r}{1-r}\right), ~ |h(z)|\leq\frac{1}{1-r}\exp\left(\int_{0}^{r}\frac{1}{(1-t)^2}\,dt\right),
$$
and thus,
\begin{equation*}
|f(z)|=\left|\frac{z}{1-z}\right||g(z)|^2\leq\frac{r}{1-r}\exp\left(\frac{2r}{1-r}\right).
\end{equation*}
Equalities occur if and only if $\mu(z)=\eta z,\,|\eta|=1$ which leads to $f(z)=\overline{\eta} f_{\frac{1}{2}}(\eta z)$.
\end{proof}

\begin{theorem}\label{DTthm}
Let $f(z)=zh(z)\overline{g(z)}\in\mathcal{C}_{Lh}$. Then for $z\in\ID$ we have
\begin{enumerate}
\item\label{item:1}  $\ds |f_{z}(z)|\leq \frac{1}{(1-|z|)^3}\exp\left(\frac{2|z|}{1-|z|}\right)$;
\item\label{item:2}  $\ds |f_{\overline{z}}(z)|\leq \frac{|z|}{(1-|z|)^3}\exp\left(\frac{2|z|}{1-|z|}\right)$;
\item\label{item:3}  $\ds |Df(z)|\leq \frac{|z|(1+|z|)}{(1-|z|)^3}\exp\left(\frac{2|z|}{1-|z|}\right)$.
\end{enumerate}
The equalities occur if and only if $f(z)$ is of the form $\overline{\eta} f_{\frac{1}{2}}(\eta z),\, |\eta|=1$,
where $f_{\frac{1}{2}}(z)$ is given by~\eqref{LRfeq}.
\end{theorem}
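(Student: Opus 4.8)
The plan is to differentiate the explicit representations established in the proof of Theorem~\ref{GCthm} and to bound each resulting factor using the same Schwarz-type estimates. Recall that for $f\in\mathcal{C}_{Lh}$ we have $f(z)=h(z)\overline{g(z)}\cdot z$ with $h(z)=\tfrac{1}{1-z}g(z)$, and from~\eqref{dfLH}, $f_{\overline z}(z)=\overline{\mu(z)}\,\overline{g(z)}^{-1}g(z)^{-1}\cdot\overline{f_z(z)}$... more concretely, I would start from the factorization $f(z)=zh(z)\overline{g(z)}$ directly: then
\begin{equation*}
f_z(z)=h(z)\overline{g(z)}+zh'(z)\overline{g(z)}=\overline{g(z)}\bigl(h(z)+zh'(z)\bigr),\qquad
f_{\overline z}(z)=zh(z)\overline{g'(z)}.
\end{equation*}
Using $h=g/(1-z)$ one gets $h+zh'=\bigl(1+z(\log h)'\bigr)h=\bigl(1+\tfrac{z}{1-z}+z(\log g)'\bigr)h$, and since $z(\log g)'(z)=\tfrac{\mu(z)}{1-\mu(z)}\cdot\tfrac{1}{1-z}$ by~\eqref{eqG}, this simplifies to $h+zh'=\tfrac{1}{(1-z)^2}\cdot\tfrac{1}{1-\mu(z)}\,g(z)$. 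Likewise $\overline{g'(z)}=\overline{(\log g)'(z)}\,\overline{g(z)}=\overline{\bigl(\tfrac{\mu(z)}{1-\mu(z)}\cdot\tfrac{1}{z(1-z)}\bigr)}\,\overline{g(z)}$, which will be where the extra factor $|z|$ in item~\eqref{item:2} disappears against the $1/z$.

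Next I would estimate. For $|z|=r$ we already know $|g(z)|\le\exp\!\bigl(\tfrac{r}{1-r}\bigr)$ from Theorem~\ref{GCthm}, and $|1-z|^{-1}\le(1-r)^{-1}$. The only new ingredient is $|1-\mu(z)|^{-1}\le(1-r)^{-1}$, which follows from $|\mu(z)|\le r$ (Schwarz lemma, as $\mu\in\mathcal B$, $\mu(0)=0$); and for item~\eqref{item:2} the bound $\bigl|\tfrac{\mu(z)}{z(1-\mu(z))}\bigr|\le\tfrac{1}{1-r}$, which was already recorded in the proof of Theorem~\ref{GCthm}. Assembling:
\begin{equation*}
|f_z(z)|=|\overline{g(z)}|\cdot\frac{1}{|1-z|^2}\cdot\frac{1}{|1-\mu(z)|}\cdot|g(z)|
\le\frac{1}{(1-r)^3}\exp\!\left(\frac{2r}{1-r}\right),
\end{equation*}
which is~\eqref{item:1}; the bound~\eqref{item:2} comes out the same way after the $1/z$ cancels one power of $z$; and~\eqref{item:3} follows from $|Df|\le|z||f_z|+|z||f_{\overline z}|=r\bigl(|f_z|+|f_{\overline z}|\bigr)$ together with~\eqref{item:1} and~\eqref{item:2}, giving the factor $r(1+r)$.

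For the equality statement I would trace the chain of inequalities: each uses $|\mu(z)|=r$ or $\bigl|\tfrac{\mu(z)}{z}\bigr|=1$, and equality in the Schwarz lemma forces $\mu(z)=\eta z$ with $|\eta|=1$; conversely $\mu(z)=\eta z$ gives, via~\eqref{EPRLHF}, exactly $f(z)=\overline\eta\,f_{\frac12}(\eta z)$, and one checks directly that this function attains all three bounds. The main obstacle is purely bookkeeping: correctly carrying the $\tfrac{1}{1-\mu(z)}$ factor through the differentiation so that the powers of $(1-z)$ and the power of $z$ come out as claimed; once the clean formulas $f_z=\tfrac{g(z)\overline{g(z)}}{(1-z)^2(1-\mu(z))}$ and $f_{\overline z}=\tfrac{\overline{\mu(z)}}{\overline z}\cdot\tfrac{g(z)\overline{g(z)}}{\overline{(1-z)}\,(1-z)}$ are in hand (note $\tfrac{\overline\mu}{\overline z}$ has modulus $\le\tfrac{1}{1-r}$ and the visible $\tfrac{1}{\overline z}$ is compensated by the $|z|$ we factor out), the estimates are immediate.
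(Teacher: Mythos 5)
Your proposal is correct and follows essentially the same route as the paper: both derive the explicit formulas $f_z=\frac{|g(z)|^2}{(1-\mu(z))(1-z)^2}$ and $f_{\overline z}=\overline{\left(\frac{\mu(z)/z}{(1-\mu(z))(1-z)}\right)}\cdot\frac{z}{1-z}|g(z)|^2$ from the defining relations $h=g/(1-z)$ and $(\log g)'=\frac{\mu}{1-\mu}\cdot\frac{1}{z(1-z)}$, then apply the Schwarz-lemma bounds $|1-\mu(z)|^{-1}\le(1-r)^{-1}$, $|\mu(z)/(z(1-\mu(z)))|\le(1-r)^{-1}$ and the estimate $|g(z)|\le\exp(r/(1-r))$, with the same triangle-inequality step for $Df$ and the same equality analysis. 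One small slip: your final ``clean formula'' for $f_{\overline z}$ drops the factor $z/\overline{(1-\mu(z))}$, but your actual derivation in the body carries it correctly, so the argument stands.
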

\begin{proof}
Let $f(z)=zh(z)\overline{g(z)}\in\mathcal{C}_{Lh}$. In view of~\eqref{eqG} and ~\eqref{eqHF}, we find that
\begin{eqnarray*}
f_{z}(z)=(h(z)+zh'(z))\overline{g(z)}= \left (1+\frac{zh'(z)}{h(z)}\right )\frac{h(z)}{g(z)}|g(z)|^2.
\end{eqnarray*}
Now the relations
\begin{equation*}
\frac{h(z)}{g(z)}=\frac{1}{1-z}\quad{\rm and}\quad 1-\mu(z)=\frac{1+zh'(z)/h(z)-zg'(z)/g(z)}{1+zh'(z)/h(z)}
\end{equation*}
give
\begin{equation*}
1+\frac{zh'(z)}{h(z)}-\frac{zg'(z)}{g(z)}=\frac{1}{1-z}
\end{equation*}
and
\begin{equation*}
1+\frac{zh'(z)}{h(z)}=\frac{1}{(1-\mu(z))(1-z)},
\end{equation*}
so that $f_{z}(z)$ takes the form
\begin{equation*}
f_{z}(z)=\frac{1}{(1-\mu(z))(1-z)^2}|g(z)|^2,
\end{equation*}
where $\mu\in \mathcal{B}$ with $\mu(0)=0$. Similarly, we see that
\begin{eqnarray*}
f_{\overline{z}}(z)=zh(z)\overline{g'(z)}
=\overline{\left(\frac{\mu(z)/z}{(1-\mu(z))(1-z)}\right)}\cdot\frac{z}{1-z}|g(z)|^2.
\end{eqnarray*}
For $|z|=r$, we have
$$\left|\frac{1}{1-\mu(z)}\right|\leq \frac{1}{1-r}\quad{\rm and}\quad
\left|\frac{\mu(z)/z}{1-\mu(z)}\right|\leq \frac{1}{1-r}
$$
so that
\begin{eqnarray*}
|f_{z}(z)|\leq\frac{1}{(1-r)^3}\exp\left(\frac{2r}{1-r}\right)\quad{\rm and}\quad
|f_{\overline{z}}(z)|\leq\frac{r}{(1-r)^3}\exp\left(\frac{2r}{1-r}\right),
\end{eqnarray*}
which prove \eqref{item:1} and \eqref{item:2}. Using these two inequalities,  we obtain that
\begin{equation*}
|Df(z)|=|zf_{z}(z)-\overline{z}f_{\overline{z}}(z)|
\leq|z|\left(|f_{z}(z)|+|f_{\overline{z}}(z)|\right)
\leq \frac{r(1+r)}{(1-r)^3}\exp\left(\frac{2r}{1-r}\right),
\end{equation*}
which proves \eqref{item:3}.

Finally, equalities occur if and only if $\mu(z)=\eta z,\,|\eta|=1$ which leads to $f(z)=\overline{\eta} f_{\frac{1}{2}}(\eta z)$.
\end{proof}

\section{Open problems}\label{Sec5}
The function $f_{0}$  given by \eqref{LKfeq} plays the role of the Koebe mapping in the set of log-harmonic mappings
(see also~\cite{Abdulhadi1988}). As an analog of analytic and harmonic Bieberbach conjectures, it is natural to propose the following:

\begin{conj}\label{LHBJ}({\bf Log-harmonic Coefficient Conjecture})
Let $f(z)=zh(z)\overline{g(z)}\in\mathcal{S}_{Lh}$, where  $h$ and $g$ are given by~\eqref{LHBJC}. Then for all $n\geq 1$,
\begin{enumerate}
\item\label{item:a} $\ds |a_{n}|\leq 2+ \frac{1}{n}$;
\item\label{item:b} $\ds |b_{n}|\leq 2- \frac{1}{n}$;
\item\label{item:c} $\ds \left|a_{n}-b_{n}\right|\leq \frac{2}{n}$.
\end{enumerate}
\end{conj}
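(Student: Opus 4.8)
The plan is to translate the three inequalities into Taylor-coefficient statements for the associated analytic data, dispose of the cases where this lands in a classically understood class, and then isolate what genuinely remains. Write $H(z)=\log h(z)=\sum_{n\ge1}a_nz^n$, $G(z)=\log g(z)=\sum_{n\ge1}b_nz^n$, $\varphi(z)=zh(z)/g(z)=z\exp(H(z)-G(z))$, and let $\mu=(zg'/g)/(1+zh'/h)\in\mathcal B$ be the dilatation. Eliminating $h$ and $g$ from the two defining relations yields
\begin{equation*}
1+\frac{zh'(z)}{h(z)}=\frac{z\varphi'(z)/\varphi(z)}{1-\mu(z)},\qquad
\frac{zg'(z)}{g(z)}=\frac{\mu(z)}{1-\mu(z)}\cdot\frac{z\varphi'(z)}{\varphi(z)},\qquad
\frac{z\varphi'(z)}{\varphi(z)}-1=\sum_{n\ge1}n(a_n-b_n)z^n ,
\end{equation*}
so that $na_n$, $nb_n$ and $n(a_n-b_n)$ are precisely the $n$-th Taylor coefficients of $1+zh'/h$, of $zg'/g$ and of $z\varphi'/\varphi-1$. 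The normalization $h(0)=g(0)=1$ forces $\mu(0)=0$ for every $f\in\mathcal S_{Lh}$, so Schwarz's lemma gives $\mu\prec z$; since $w\mapsto w/(1-w)$ and $w\mapsto 1/(1-w)$ are univalent with half-plane (hence convex) image, it follows that $\mu/(1-\mu)\prec z/(1-z)$ and $1/(1-\mu)\prec 1/(1-z)$, whence, by Rogosinski's subordination inequality (cf. Lemma~\ref{SCSB}(1)), the Taylor coefficients of $\mu/(1-\mu)$ and of $1/(1-\mu)$ are bounded in modulus by $1$.

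This machinery proves the conjecture outright on the starlike subclass. If $f\in\mathcal S^{*}_{Lh}$, then $\varphi\in\mathcal S^{*}$ by Theorem~\ref{thmLSS}, so $z\varphi'/\varphi$ has positive real part and value $1$ at the origin; hence, by Carathéodory's lemma, the Taylor coefficients of $z\varphi'/\varphi$ (from the linear term on) are bounded in modulus by $2$. Multiplying the two products above and collecting the constant-times-coefficient and cross contributions gives, for $n\ge1$,
\begin{equation*}
|na_n|\le 1+2+2(n-1)=2n+1,\qquad |nb_n|\le 1+2(n-1)=2n-1,\qquad |n(a_n-b_n)|\le 2 ,
\end{equation*}
that is, $|a_n|\le 2+\tfrac1n$, $|b_n|\le 2-\tfrac1n$ and $|a_n-b_n|\le\tfrac2n$, with equality throughout for $f=f_0$ of \eqref{LKfeq} and its rotations (the last inequality is Corollary~\ref{corSTLh}). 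The same argument yields the conjecture on any subclass of $\mathcal S_{Lh}$ for which $z\varphi'/\varphi$ is subordinate to a conformal map onto a half-plane, e.g. whenever $\varphi$ is close-to-convex with respect to a starlike function.

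The main obstacle is the passage to the full class $\mathcal S_{Lh}$, where $\varphi=zh/g$ need not be univalent: since $f=\varphi|g|^2$, univalence of $f$ forbids $\varphi(z_1)=\varphi(z_2)$ only when in addition $|g(z_1)|=|g(z_2)|$, so there is no a priori control of $z\varphi'/\varphi$. And the difficulty is essential rather than technical: the estimate $|\gamma_n|\le 1/n$ for the logarithmic coefficients of members of $\mathcal S$ is itself false already for $n=2$, so even establishing $\varphi\in\mathcal S$ would not deliver (c); the conjecture implicitly asserts that univalence of the log-harmonic map imposes on the pair $(\varphi,\mu)$ a restriction of the same coefficient-theoretic strength as starlikeness. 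I therefore expect a complete proof to require a Loewner-type parametric representation for $\mathcal S_{Lh}$: embed $f$ in a subordination chain $f_t=zh_t\overline{g_t}$, derive the evolution equations governing $h_t$ and $g_t$, reduce (a)--(c) to a Lebedev--Milin-type functional inequality, invoke an analog of de Branges' theorem, and verify that $f_0$ realizes the extremal. Lacking such a representation, the realistic partial targets are the starlike and close-to-convex cases above, the case $n=1$ (where $|b_1|=|\mu'(0)|\le1$ is immediate, and (a) and (c) both reduce to the second-coefficient estimate $|\varphi_2|\le2$ for $\varphi$), and low-order coefficients via area-type arguments.
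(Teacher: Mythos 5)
This statement is labelled a conjecture, and the paper contains no proof of it: the only surrounding ``argument'' is the remark that the second and third inequalities imply the first, via $|a_n|\le|a_n-b_n|+|b_n|\le \frac{2}{n}+2-\frac{1}{n}$, together with a citation of the starlike case to Abdulhadi--Hengartner and to Corollary~\ref{corSTLh}. Your proposal is therefore not in conflict with the paper; it is strictly more informative. Your proof of the starlike case is correct and self-contained where the paper only cites: from $1+zh'/h=(z\varphi'/\varphi)/(1-\mu)$ and $zg'/g=\mu\,(z\varphi'/\varphi)/(1-\mu)$, the Carath\'eodory bound $2$ on the coefficients of $z\varphi'/\varphi$ (valid since $\varphi\in\mathcal S^*$ by Theorem~\ref{thmLSS}) and the bound $1$ on the coefficients of $\mu/(1-\mu)$ and $1/(1-\mu)$ (Schwarz plus the subordination lemma) convolve to give exactly $|na_n|\le 2n+1$, $|nb_n|\le 2n-1$, $|n(a_n-b_n)|\le 2$, all attained by $f_0$. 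Incidentally your computation implicitly corrects a typo in \eqref{eqGHAK}: the two exponential series there are interchanged; one should have $a_n=2+\frac1n$ for $h=(1-z)^{-1}\exp(2z/(1-z))$ and $b_n=2-\frac1n$ for $g=(1-z)\exp(2z/(1-z))$, which is what makes the conjectured bounds consistent with $f_0$ as extremal.

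Your diagnosis of why the full conjecture is out of reach is also accurate and worth stating: the third inequality is precisely the logarithmic-coefficient bound $|\gamma_n|\le 1/n$ for $\varphi=zh/g$, which fails on $\mathcal S$ for every $n\ge 2$, and univalence of $f=\varphi|g|^2$ does not even force $\varphi\in\mathcal S$. So no Bieberbach--de Branges-type argument applied to $\varphi$ alone can settle the statement; one genuinely needs to exploit the joint constraint that univalence of $f$ places on the pair $(\varphi,\mu)$, for which no parametric representation is currently available. The only item you might have added explicitly is the paper's one concrete reduction --- that it suffices to establish the bounds on $|b_n|$ and $|a_n-b_n|$ --- though your direct derivation of the bound on $|a_n|$ in the starlike case renders this moot there.
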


We remark that if \eqref{item:b} and \eqref{item:c} hold in Conjecture \ref{LHBJ}, then, by \eqref{item:c}, we can obtain
\begin{equation*}
|a_n|\leq\frac{2}{n}+|b_n|\leq\frac{2}{n}+2-\frac{1}{n}=2+\frac{1}{n}.
\end{equation*}
Thus, it suffices to prove \eqref{item:b} and \eqref{item:c}.
It is worth pointing out that Conjecture \ref{LHBJ} is true for starlike log-harmonic mappings, see
\cite[Theorem 3.3]{Abdulhadi1989} and Corollary~\ref{corSTLh}.

\begin{conj}\label{LHCTJ}({\bf  Log-harmonic $1/e^2$-Covering Conjecture})
For $f\in \mathcal{S}_{Lh}$, we conjecture that $f(\ID)$ covers the disk $\{w\in\mathbb{C}:\,|w|<1/e^2\}$.
\end{conj}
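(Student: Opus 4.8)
\medskip
\noindent\textbf{A proposal towards Conjecture~\ref{LHCTJ}.}
The plan is to recast the covering property as a sharp lower bound for an associated analytic function, to dispose of the starlike subclass with the tools already at hand, and then to isolate the obstruction in the general case. Since every $f\in\mathcal{S}_{Lh}$ is a homeomorphism of $\ID$ onto the simply connected domain $\Omega=f(\ID)\ni 0$, an elementary argument using the continuity of $f^{-1}$ shows that $\operatorname{dist}(0,\partial\Omega)=\liminf_{|z|\to 1^{-}}|f(z)|$; thus Conjecture~\ref{LHCTJ} is equivalent to the assertion that $\liminf_{|z|\to1^{-}}|f(z)|\ge e^{-2}$ for every $f=zh\overline{g}\in\mathcal{S}_{Lh}$. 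Because $|f(z)|=|z|\,|h(z)g(z)|$ and $H:=hg$ is analytic, non-vanishing and normalized by $H(0)=1$, this in turn reduces to $\liminf_{|z|\to1^{-}}|H(z)|\ge e^{-2}$, and it would follow at once from the pointwise inequality $|H(z)|\ge e^{-2}$ on $\ID$. From Algorithm~\ref{algor1} one has the closed form
$$\log H(z)=\int_{0}^{z}\frac{1}{s}\left(\frac{s\varphi'(s)}{\varphi(s)}\cdot\frac{1+\mu(s)}{1-\mu(s)}-1\right)ds,$$
where $\varphi=zh/g$ and $\mu\in\mathcal{B}$ with $\mu(0)=0$. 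For the log-harmonic Koebe function~\eqref{LKfeq} one computes from~\eqref{eqGHAK} that $H_{0}(z)=\exp\!\big(\sum_{n\ge1}4z^{n}\big)=\exp\!\big(4z/(1-z)\big)$, whence $|H_{0}(z)|=\exp\!\big(4\RE\frac{z}{1-z}\big)>e^{-2}$ with equality only in the boundary limit; so the sharp target is $\RE\log H(z)>-2$ on $\ID$, equivalently $\log H\prec 4z/(1-z)$.

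First I would settle the starlike subclass, for which the statement is essentially known. If $f\in\mathcal{S}^{*}_{Lh}$, then $\varphi=zh/g\in\mathcal{S}^{*}$ by Theorem~\ref{thmLSS}, so $P(z):=z\varphi'(z)/\varphi(z)$ satisfies $\RE P>0$ and $P(0)=1$; since also $(1+\mu)/(1-\mu)\prec(1+z)/(1-z)$ and $\mu(0)=0$, a careful estimate of the integral above yields the sharp lower growth bound $|f(z)|\ge|z|\exp\!\big(-4|z|/(1+|z|)\big)$ for $\mathcal{S}^{*}_{Lh}$, hence $\liminf_{|z|\to1^{-}}|f(z)|\ge e^{-2}$, with equality exactly for $f_{0}$ and its rotations; this is the covering counterpart of~\cite[Theorem~3.3]{Abdulhadi1989} and Corollary~\ref{corSTLh}.

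The hard part will be the passage from starlike mappings to all of $\mathcal{S}_{Lh}$, because univalence alone --- the only hypothesis separating a member of $\mathcal{S}_{Lh}$ from a general sense-preserving log-harmonic mapping of the form $\varphi\exp\!\big(2\RE\int_{0}^{z}\frac{\mu(s)}{1-\mu(s)}\frac{\varphi'(s)}{\varphi(s)}\,ds\big)$ --- does not obviously control $\RE\log H$; for such maps $\RE\log H$ can be made arbitrarily negative, and it is precisely global injectivity that should rule this out. The classical device for analytic univalent functions, the Koebe transform $w_{0}f/(w_{0}-f)$, breaks down here: a direct computation shows it satisfies a Beltrami-type equation with dilatation $\mu\cdot\frac{\overline{w_{0}}\,(w_{0}-f)}{w_{0}\,\overline{w_{0}-f}}$, which is \emph{not analytic}, so $\mathcal{S}_{Lh}$ is not closed under this operation and one cannot deduce the covering radius from the second-coefficient estimate $|a_{1}-b_{1}|\le2$ (cf. Conjecture~\ref{LHBJ}) in the usual way. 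Equivalently, writing $\psi\colon\ID\to f(\ID)$ for the Riemann map with $\psi(0)=0$ and $\psi'(0)>0$, Koebe's theorem gives $f(\ID)\supset\{w:|w|<\psi'(0)/4\}$; since $\psi^{-1}\!\circ f$ is a sense-preserving self-homeomorphism of $\ID$ fixing $0$ with $(\psi^{-1}\!\circ f)_{\overline{z}}(0)=0$ and $\psi'(0)\,(\psi^{-1}\!\circ f)_{z}(0)=1$, the conjecture becomes the sharp bound $|(\psi^{-1}\!\circ f)_{z}(0)|\le e^{2}/4$, attained by $f_{0}$ --- but this merely reformulates the difficulty. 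My expectation is that the general case requires a genuinely new structural or two-point distortion theorem for $\mathcal{S}_{Lh}$, describing which pairs $(\varphi,\mu)$ in Algorithm~\ref{algor1} yield a univalent $f$, strong enough to force $\RE\log H>-2$; I regard this as the real obstacle, on a par with the still-unresolved covering constant for harmonic univalent mappings~\cite{Clunie1984}.
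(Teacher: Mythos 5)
\medskip
\noindent\textbf{Comment.}
The statement you were given is one of the paper's two \emph{open} conjectures: the authors offer no proof, only the remarks that the constant $1/e^{2}$ is attained in the limit by the log-harmonic Koebe mapping $f_{0}$ of \eqref{LKfeq} and that the weaker $1/16$-covering was established in \cite{Abdulhadi1988}. So there is no argument in the paper to compare yours against, and your submission, correctly, does not claim to close the problem. The reductions you do carry out check out: the identity $\operatorname{dist}(0,\partial f(\ID))=\liminf_{|z|\to 1^{-}}|f(z)|$ is valid for a homeomorphism of $\ID$ onto its image, the factorization $|f|=|z|\,|hg|$ and the integral formula for $\log H$ with $H=hg$ follow from \eqref{EPRLHG}--\eqref{EPRLHF} exactly as you write them, $H_{0}=\exp\bigl(4z/(1-z)\bigr)$ is correct by \eqref{eqGHAK}, and your computation showing that the Koebe transform turns the dilatation into the non-analytic quantity $\mu\cdot\overline{w_{0}}(w_{0}-f)/\bigl(w_{0}\overline{w_{0}-f}\bigr)$ is right; this is indeed why Bieberbach's covering argument does not transfer. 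The starlike case you set aside is covered by the known sharp growth bound $|f(z)|\ge |z|\exp\bigl(-4|z|/(1+|z|)\bigr)$ for $\mathcal{S}^{*}_{Lh}$ (it underlies the Bohr-radius work in \cite{Ali2016}), but you should cite or reprove it rather than gesture at ``a careful estimate'': the term-by-term bounds $|P-1|\le 2\rho/(1-\rho)$ and $|Q|\le(1+\rho)/(1-\rho)$ only give $\RE\log H\ge -4r/(1-r)$, which tends to $-\infty$, so the refined constant $-4r/(1+r)$ genuinely requires the extremal analysis of the product $PQ$.

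Two smaller corrections. First, the pointwise inequality $|H(z)|\ge e^{-2}$ on all of $\ID$ is strictly stronger than what is needed (only the liminf as $|z|\to 1^{-}$ matters), so it should be flagged as a sufficient target, not as an equivalent reformulation. Second, your Riemann-map version is likewise only sufficient: Koebe's theorem gives $\operatorname{dist}(0,\partial f(\ID))\ge \psi'(0)/4=1/\bigl(4|u_{z}(0)|\bigr)$ with equality only for slit domains, so $|u_{z}(0)|\le e^{2}/4$ implies the covering, but a mapping could violate that bound and still cover the disk; saying the conjecture ``becomes'' this bound overstates the matter. With those caveats, your assessment agrees with the paper's: the conjecture is settled for the starlike subclass and open in general, and the obstruction is exactly the one you identify, namely that univalence of $f$ gives no direct control on $\RE\log(hg)$.
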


For log-harmonic Koebe mapping $f_{0}(z)$ defined by \eqref{LKfeq}, the constant $1/e^2$ (see Example~\ref{falpha}) cannot be improved.
In \cite{Abdulhadi1988}, it was shown that the range of $f\in \mathcal{S}_{Lh}$ covers the disk $\{w\in \mathbb{C}:\, |w|<1/16\}$.




\begin{center}{\sc Acknowledgements}
\end{center}

\vskip.05in
The research of the first author was supported by the Foundation of Guilin University of Technology under Grant No.GUTQDJJ2018080, the Natural Science Foundation of Guangxi (Grant No. 2018GXNSFAA050005) and Hunan Provincial Key Laboratory of Mathematical Modeling and Analysis in Engineering (Changsha University of Science \& Technology).
The work of the second author is supported by Mathematical Research Impact Centric Support of  Department of Science and Technology (DST),
India~(MTR/2017/000367).




\end{document}